\documentclass[12pt, reqno, a4paper]{amsart}
\linespread{1.1}

\usepackage{amsfonts, amsmath, amsthm, amstext, amssymb}
\usepackage{mathtools}
\usepackage{url}
\usepackage{paralist}
\usepackage{tikz}\usetikzlibrary{arrows}
\usepackage{tikz-cd}
\usepackage[english]{babel}
\usepackage{framed}
\usepackage{amsrefs}
\usepackage[marginratio=1:1,height=660pt,width=410pt,tmargin=100pt]{geometry}
\usepackage{lipsum}
\usepackage{layout}
\usepackage{hyperref}

\DeclareFontFamily{U}{mathx}{\hyphenchar\font45}
\DeclareFontShape{U}{mathx}{m}{n}{
      <5> <6> <7> <8> <9> <10>
      <10.95> <12> <14.4> <17.28> <20.74> <24.88>
      mathx10
      }{}
\DeclareSymbolFont{mathx}{U}{mathx}{m}{n}
\DeclareFontSubstitution{U}{mathx}{m}{n}
\DeclareMathAccent{\widecheck}{0}{mathx}{"71}

\usepackage[OT2,T1]{fontenc}
\DeclareSymbolFont{cyrletters}{OT2}{wncyr}{m}{n}
\DeclareMathSymbol{\Sha}{\mathalpha}{cyrletters}{"58}

\DeclareMathOperator{\Spec} {Spec}

\DeclareMathOperator{\poly} {poly}

\DeclareMathOperator{\rk} {rk}

\DeclareMathOperator{\CC} {\mathbb{C}}

\DeclareMathOperator{\NN} {\mathbb{N}}

\DeclareMathOperator{\QQ} {\mathbb{Q}}

\DeclareMathOperator{\ZZ} {\mathbb{Z}}

\newtheorem{lemma}{Lemma}[section]
\newtheorem{thm}[lemma]{Theorem}

\theoremstyle{definition}\newtheorem{definition}[lemma]{Definition}
\theoremstyle{definition}
\theoremstyle{definition}

\newcommand{\mb}[1]{\mathbb{#1}}
\newcommand{\mc}[1]{\mathcal{#1}}
\newcommand{\mf}[1]{\mathfrak{#1}}
\newcommand{\ra}{\rightarrow}
\newcommand{\hra}{\hookrightarrow}
\newcommand{\ol}[1]{\overline{#1}}

\newcommand{\tb}[1]{\textbf{#1}}

\begin{document}
\title[Hypersurface coverings of regular rational points]{Slope-determinant method, complex cellular structures and hypersurface coverings of regular rational points}
\author{Kenneth Chung Tak Chiu}
\address{Department of Mathematics and New Cornerstone Science Laboratory, The University of Hong Kong.}
\address{Max-Planck-Institut f\"{u}r Mathematik, Bonn, Germany}
\email{kennethct.chiu@alumni.utoronto.ca}
\begin{abstract}
We use the determinant method of Bombieri-Pila and Heath-Brown and its Arakelov reformulation by Chen utilizing Bost's slope method to estimate the number of hypersurfaces required to cover the regular rational points with bounded Arakelov height on a projective variety.
Using complex cellular structures introduced by Binyamini-Novikov, we replace the usual subpolynomial factor by a polylogarithmic factor in the estimation.
\end{abstract}

\maketitle

\section{Introduction}
\subsection{Statement of result}
Let $K$ be a finite extension of $\QQ$.
Let $M$ be a positive integer.
Let $\iota: X\hookrightarrow \mb{P}_K^M$ be a closed immersion, where $X$ is an irreducible subvariety of dimension $n$ and degree $d$.
Let $\mc{X}$ be the Zariski closure of $X$ in $\mb{P}^M_{\mc{O}_K}$.
Let $\ol{\mc{O}_{\mc{X}}(1)}$ be the pullback Hermitian line bundle on $\mc{X}$ of the standard Hermitian line bundle $\ol{\mc{L}}:=\ol{\mc{O}(1)}$ on $\mb{P}^M_{\mc{O}_{K}}$ equipped with the Fubini-Study metric. 
The Hermitian line bundle $\ol{\mc{L}}$ leads to a notion of exponential height in Arakelov geometry, see Definition \ref{height of a point}. We will simply refer to it as height. In this paper, we prove the following:
 
\begin{thm}\label{Arakelov Bombieri-Pila}
There exists a constant $B_0>0$ depending only on $M,n,d$ and $K$ such that for any $B>B_0$,
the set $S_1(X,B)$ of regular $K$-points of $X$ with height $\leq B$ is covered by $N$ hypersurfaces defined over $K$ of degree $\lceil d\log B\rceil$, none of which contain $X$,
where  
$$N\leq \poly_{M,n,K}(d,\log B) \cdot B^{(n+1)d^{-1/n}}.$$
\end{thm}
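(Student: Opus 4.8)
The plan is to run the Bombieri--Pila--Heath--Brown determinant method in the Arakelov-theoretic form due to Chen, where the auxiliary determinant estimate is packaged into Bost's slope inequality for an evaluation morphism of Hermitian bundles over $\Spec\mc{O}_K$, and to feed its archimedean input through a complex cellular decomposition of the analytic locus of $X$; the cellular structure is exactly what makes the final bound polynomial in $d$ and $\log B$, rather than carrying the usual subpolynomial $B^{\varepsilon}$. Set $D:=\lceil d\log B\rceil$ throughout.

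First I would fix the auxiliary data. Cover $\mb{P}^M$ by its standard affine charts and, in each chart, work with the $n$-dimensional affine slice of $X$. Let $E_D$ be the image of $H^0(\mb{P}^M_{\mc{O}_K},\mc{O}(D))$ in $H^0(\mc{X},\mc{O}_{\mc{X}}(D))$, equipped at the archimedean places with the sup-norms attached to $\ol{\mc{O}_{\mc{X}}(1)}^{\otimes D}$, so that $E_D$ is a Hermitian vector bundle over $\Spec\mc{O}_K$. For $B>B_0$ the restriction $H^0(\mb{P}^M,\mc{O}(D))\to H^0(X,\mc{O}_X(D))$ is surjective, hence $E_D$ has rank $r=h^0(X,\mc{O}_X(D))=\tfrac{d}{n!}D^{n}+O(D^{n-1})$, its generic fibre parametrises the degree-$D$ hypersurface sections modulo those vanishing on $X$, and the arithmetic Hilbert--Samuel theorem for $\ol{\mc{O}_{\mc{X}}(1)}$ gives $\widehat{\deg}(E_D)\ge\kappa\,rD$ for some $\kappa=\kappa_{M,n,d,K}>0$, up to an error $o(rD)$; the choice $D\asymp d\log B$ is what keeps this, and every other lower-order term below, negligible next to $rD\log B$. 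Next, fix an archimedean place $v_0$ of $K$ and, inside the affine charts, cover the smooth analytic locus of $X$ at $v_0$ by finitely many images of holomorphic maps from the unit polydisc $\Delta^{n}$; then invoke the complex cellular lemma of Binyamini--Novikov --- at scale $\delta:=c_0\,B^{-\frac{n+1}{n}d^{-1/n}}$ (with $c_0=c_0(M,n,d,K)>0$ small) and to the order $\asymp d^{1/n}D$ needed to separate the $r$ sections of $E_D$ along an $n$-dimensional chart --- to refine this covering into cells, each carrying a holomorphic $2$-extension on which the parametrisation and its derivatives up to that order are bounded absolutely. The decisive point is the cell count: there are at most $\poly_{M,n}(d,\log B)\cdot\delta^{-n}=\poly_{M,n,K}(d,\log B)\cdot B^{(n+1)d^{-1/n}}$ cells, the polynomial (rather than subpolynomial $B^{\varepsilon}$-type) dependence on the refinement scale being the source of the improvement, in place of a classical $C^{k}$-reparametrisation of Yomdin--Gromov type.

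Now fix a cell $C$ and let $\Sigma_C$ be the set of $P\in S_1(X,B)$ whose image at $v_0$ lies in $C$. Suppose, towards a contradiction, that $\Sigma_C$ is contained in no hypersurface of degree $D$ not containing $X$; then $|\Sigma_C|\ge r$, and one can choose $\Sigma_C'\subseteq\Sigma_C$ of size $r$ for which the evaluation morphism $\varphi\colon E_D\to\bigoplus_{P\in\Sigma_C'}P^{*}\ol{\mc{O}_{\mc{X}}(D)}$ is an isomorphism on generic fibres. Bost's slope inequality, in the form used by Chen, then yields
\[
\widehat{\deg}(E_D)\le\sum_{P\in\Sigma_C'}\widehat{\deg}\!\left(P^{*}\ol{\mc{O}_{\mc{X}}(D)}\right)+\sum_{v\mid\infty}n_v\log\|\bigwedge^{r}\varphi\|_v+O(r\log r).
\]
Each height term is $D\,\widehat{\deg}(P^{*}\ol{\mc{O}_{\mc{X}}(1)})\le D\log B$; at every archimedean place $\|\bigwedge^{r}\varphi\|_v\le1$; and at $v_0$ the $r$ points all lie in $C$, so expanding $E_D$ in a basis adapted to the Taylor filtration along the $n$-dimensional cell and invoking the Schwarz lemma on its $2$-extension gives $\log\|\bigwedge^{r}\varphi\|_{v_0}\le-\tfrac{n}{n+1}d^{1/n}rD\log(1/\delta)+O(r)$. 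Substituting and using $\widehat{\deg}(E_D)\ge\kappa rD$, the inequality collapses to $\tfrac{n}{n+1}d^{1/n}\log(1/\delta)\le\log B+O_{M,n,d,K}(1)$, which contradicts the choice of $\delta$ once $c_0$ is small enough and $B>B_0$. Hence the points of every cell lie on a single hypersurface of degree $D$ not containing $X$; collecting one such per cell, adding $O_{M,n,d}(1)$ further hypersurfaces (with $X^{\mathrm{sing}}$ handled by induction on $\dim X$), and, if needed, multiplying by linear forms not containing $X$ to make each degree equal to $\lceil d\log B\rceil$, we obtain $N\le\poly_{M,n,K}(d,\log B)\cdot B^{(n+1)d^{-1/n}}$.

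The main obstacle is this last step, and specifically the interplay there of transcendental and arithmetic data: one must organise the evaluation in a basis of $E_D$ for which the cellular coordinates --- which are transcendental --- nonetheless produce a quantitatively sharp Schwarz-type decay of $\bigwedge^{r}\varphi$ at $v_0$, strong enough to overcome both the arithmetic lower bound $\widehat{\deg}(E_D)\gg rD$ and the height contributions, and one must do so with every implied constant uniform over the polynomially many cells and polynomial in $d$ and $\log B$, while checking that the resulting scale $\delta$ and degree $D\asymp d\log B$ are consistent with the cellular complexity bound. Making Chen's slope reformulation and the Binyamini--Novikov cellular structures cooperate under exactly this uniformity is the heart of the matter.
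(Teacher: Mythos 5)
Your proposal takes a genuinely different route from the paper, and it has at least one concrete gap.

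\textbf{Where the routes diverge.} The paper keeps the cellular refinement parameter $\delta$ a \emph{constant} depending only on $M,n,d,K$ (see the explicit choice in Lemma~\ref{auxiliary hypersurface}, equation~(\ref{Choosing delta})), so the complex-cellular cover contributes only the polylogarithmic factor. The entire $B^{(n+1)d^{-1/n}}$ is manufactured on the non-archimedean side: Dudek's primes-between-cubes theorem \cite{Dud16} produces $r\approx \poly(d)$ primes near $N_0=B^{d^{-1/n}(n+1)/(nv)}$; Lemma~\ref{auxiliary hypersurface} then shows that for each prime, each lift $\eta\in\mc{X}(A_{\mf{p}_i}^{(a_i)})$, and each tuple of cells (one per archimedean place, but with constant $\delta$), the corresponding points lie on one degree-$D$ hypersurface; and the $\approx N_{\mf{p}_i}^{a_in}\approx B^{(n+1)d^{-1/n}}$ choices of $\eta$ (via Ghorpade--Lachaud) give the main factor. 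By contrast, you propose to let $\delta\asymp B^{-(n+1)d^{-1/n}/n}$ shrink with $B$, use no primes at all, and extract the $B^{(n+1)d^{-1/n}}$ from the cell count. That is essentially the Binyamini--Novikov route (which the paper explicitly contrasts its strategy against); it is not the paper's slope-plus-$p$-adic argument.

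\textbf{The gap.} Your cell count is asserted to be $\poly\cdot\delta^{-n}$, but the statement of BN19's Lemma~94 as used in the paper gives $\poly_M(\mu\log(1/\delta))\cdot\delta^{-2n}$. With $\delta\asymp B^{-(n+1)d^{-1/n}/n}$ forced by the slope inequality (your own computation), that yields $B^{2(n+1)d^{-1/n}}$, doubling the exponent. You would need a sharper cellular cover estimate, or a sharper archimedean determinant bound that lets you take $\delta\asymp B^{-(n+1)d^{-1/n}/(2n)}$, to close this. The paper sidesteps the issue entirely by keeping $\delta$ constant and deriving the $B^{(n+1)d^{-1/n}}$ from the finite-field side; indeed, it remarks that mixing a shrinking-$\delta$ cellular cover with the $p$-adic count would overcount.

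Two further remarks. First, your parenthetical about handling $X^{\sing}$ by induction suggests you have lost track of why the theorem is stated for \emph{regular} points: the restriction is not to make the archimedean cellular argument work, but because Chen's lower bound on $Q_{\xi}(r_1(D))$ (\cite[Prop.~3.5]{Chen12b}) requires the reduction $\xi$ to be a regular $\mb{F}_{\mf{p}}$-point. If your purely archimedean plan worked it would actually prove more (all $K$-points), which should give you pause. Second, the claim that $\|\bigwedge^r\varphi\|_v\le 1$ at archimedean places other than $v_0$ would need to be checked carefully against the John norm used on $F_D$; the paper avoids this by applying the cellular decay bound at \emph{every} archimedean place and hence intersects cell covers over all $\sigma:K\hra\CC$, contributing $\delta^{-2n[K:\QQ]}$ — harmless with constant $\delta$, disastrous with a shrinking one.
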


\subsection{Previous works and strategy of proof}
When $X$ is a curve and $M=2$, Heath-Brown \cite{Hea02} proved that the number of rational points on $X$ with bounded Weil height $B$ is at most $O_{\varepsilon, d}(B^{2/d+\varepsilon})$ for any $\varepsilon >0$.
This was later improved by Salberger \cite{Sal07} to $O_{M,d}(B^{2/d}\log B)$ for arbitrary $M$, and by Walsh \cite{Wal15} to $O_{M,d}(B^{2/d})$.
Binyamini, Cluckers and Kato proved in a recent paper \cite{BCK24} that the implicit coefficient in Salberger's bound can be made quadratic in $d$. 
For any dimension $n$, Broberg \cite{Bro04} estimated the number of hypersurfaces required to cover the rational points on $X$ with bounded Weil height.
Broberg's bound is of the form $O_{M,d,\varepsilon}(B^{(n+1)d^{-1/n}+\varepsilon})$ and the degrees of the hypersurfaces are $O_{M,d,\varepsilon}(1)$.
The proof strategy in Heath-Brown \cite{Hea02} consists of constructing an auxiliary hypersurface of low degree that covers rational points having the same reduction modulo a large prime number.
This idea, called the determinant method, is inspired by results of Bombieri-Pila \cite{BP89} and Pila \cite{Pil95}.
Bombieri-Pila \cite{BP89} proved conjectures raised by Sarnak \cite{Sar88}.

Chen \cite{Chen12b} estimated the number of hypersurfaces required to cover the regular rational points on $X$ with bounded Arakelov height. Chen's bound is of the form $CB^{(1+\varepsilon)(n+1)d^{-1/n}}$, where $C$ is a constant depending only on $M, n, d, K, \varepsilon$. Chen achieved this by reformulating the determinant method in the framework of Arakelov geometry using the evaluation map and the slope method by Bost \cite{Bos96}, \cite{Bos01}, \cite{Bos06}. 
Binyamini and Novikov \cite{BN19} introduced the notion of complex cellular structures and used it together with the usual determinant method to estimate the number of hypersurfaces required to cover the rational points on $X$ with bounded Weil height \cite[Theorem 6]{BN19}. Their bound is the form 
$\poly_{M}(d, \log B)\cdot B^{(n+1)d^{-1/n}}$ and the degree of the hypersurfaces is $\poly_M(d)\cdot \lceil \log B \rceil$.

Compared with the work of Binyamini-Novikov \cite{BN19}, we replace Weil height by Arakelov height and we work with arbitrary number fields.
However, we only consider regular rational points.
Compared with the work of Chen \cite{Chen12b}, the subpolynomial factor $CB^{\varepsilon (n+1) d^{-1/n}}$ is replaced by a polylogarithmic factor $\poly_{M,n,K}(d,\log B)$, although our $B_0$ is not explicit, and the degree of our hypersurfaces is $\lceil d \log B\rceil$ instead of $O_{M,n,d,\varepsilon}(1)$.

By comparing the height used in this paper with the height used in Binyamini-Novikov \cite{BN19}, it might be possible to deduce an Arakelov version similar to Theorem \ref{Arakelov Bombieri-Pila} from a generalization of Theorem 6 in \emph{op. cit.} to arbitrary number field. 
However, the degree of the hypersurfaces in such Arakelov version will depend on $[K:\QQ]$.

Our proof combines Chen's strategy \cite{Chen12b} with the strategy of Binyamini-Novikov \cite{BN19}.
Lemma 93 of Binyamini-Novikov \cite{BN19} gives us sharper bounds for the Archimedean norms of the determinant of the evaluation map when the rational points are in the same complex cell. 
By taking the degree of the hypersurfaces to be $d\log B$, certain terms and factors in these sharper bounds  offset against those coming from the finite places in the Arakelov setting, so we can choose $\delta$ in a way that the size of the covering by complex cells admitting $\delta$-extension is $\poly_{M,n,K}(d,\log B)$, without the factor $B^{(n+1)d^{-1/n}}$ which appears in \cite[B.2.2]{BN19}. This avoids the issue of overcounting as the factor $B^{(n+1)d^{-1/n}}$ appears when we count points over finite fields using Chen's strategy.
We also replace the use of Bertrand's postulate in \cite[Theorem 4.2]{Chen12b} by the use of a theorem on primes between cubes by Dudek \cite{Dud16}, 
in order to obtain polynomial dependence on $d$ in the polylogarithmic factor of the bound.

\subsection{Further problems}
Let us mention some further problems for future investigations:

\begin{itemize}
\item One may try to use similar strategy to bound the number of rational points on a planar curve of degree $d$ with height at most $d$. One expects the number to be $\ll_K d^2$. This would refine \cite[Theorem B]{Chen12b}.
\item It would be interesting to obtain a version of Theorem \ref{Arakelov Bombieri-Pila} for all $K$-points. 
\item As mentioned earlier, Walsh \cite{Wal15} eliminated the logarithmic factor in the curve case. It would be interesting to study whether the polylogarithmic factor in Theorem \ref{Arakelov Bombieri-Pila} could also be eliminated. 
\item It is worth to consider other Hermitian line bundles and prove analogues of Theorem \ref{Arakelov Bombieri-Pila} for them.
\end{itemize}

\subsection{Notations in Arakelov geometry}
We first recall some notions in Arakelov geometry that were used in Chen's works \cite{Chen12a} and \cite{Chen12b}.
Let $K$ be a finite extension of $\QQ$.
An \emph{arithmetic projective variety} is an integral projective $\mc{O}_K$-scheme which is flat over $\Spec \mc{O}_K$.
A \emph{Hermitian vector bundle} on an arithmetic projective variety $X$ is a pair $\ol{E}=(E, \|\cdot\|_{\sigma})_{\sigma:K\hra \CC}$, where $E$ is a locally free $\mc{O}_X$-module of finite rank,
and for any embedding $K\hra \CC$, $\|\cdot\|_{\sigma}$ is a continuous Hermitian metric on $E_{\sigma}(\CC)$, invariant under the action of the complex conjugation.
The \emph{Arakelov degree} of a Hermitian vector bundle $\ol{E}$ on $\Spec \mc{O}_K$ is defined as
$$\widehat{\deg}(\ol{E}):=\log \# (E/(\mc{O}_Ks_1+\cdots +\mc{O}_K s_r))-\frac{1}{2}\sum_{\sigma: K\hra \CC}\log (\langle s_i,s_j\rangle_{\sigma}),$$
where $(s_1,\dots, s_r)\in E^r$ forms a basis of $E_K$ over $K$, where $r$ is the rank of $E$.

\begin{definition}
Let $\ol{E}$ be a non-zero Hermitian vector bundle on $\Spec \mc{O}_K$. The \emph{slope} of $\ol{E}$ is 
$$\widehat{\mu} (\ol{E}):=\frac{1}{[K:\QQ]} \frac{\widehat{\deg}(\ol{E})}{\rk(E)}.$$
Denote by $\widehat{\mu}_{\max}(\ol{E})$ the maximal values of slopes of all non-zero Hermitian subbundles of $\ol{E}$.
\end{definition}

\begin{definition}\label{height of a point}
Any point $P\in \mb{P}^M_K(K)$ gives rise to a unique $\mc{O}_K$-point $\mc{P}$ of $\mb{P}^M_{\mc{O}_K}$. Denote by $h(P)$ the slope of $\mc{P}^*\ol{\mc{L}}$.
The \emph{height} of $P$ is defined to be $H(P):=\exp ([K:\QQ] h(P))$.
Denote by $S(X,B)$ the subset of $X(K)$ consisting of points $P$ such that $H(P)\leq B$.
Denote by $S_1(X,B)$ the subset of $S(X,B)$ of regular points. 
\end{definition}

\begin{definition}
Let $X$ be an integral closed subscheme of $\mb{P}^M_K$ of dimension $n$ and $d$. 
The \emph{Arakelov height} of $X$ is defined to be 
$$h_{\ol{\mc{L}}}(X):=\frac{1}{[K:\QQ]}\widehat{\deg}(\widehat{c_1} (\ol{\mc{L}})^{d+1} \cdot [\mc{X}]),$$
where $\mc{X}$ is the Zariski closure of $X$ in $\mb{P}^M_{\mc{O}_K}$.
\end{definition}

For any maximal ideal $\mf{p}$ of $\mc{O}_K$, denote by $\mb{F}_{\mf{p}}$ its residue field, and by $N_{\mf{p}}$ the cardinality of $\mb{F}_{\mf{p}}$.  
For any point $\xi\in \mc{X}(\mb{F}_{\mf{p}})$, 
let $\mf{m}_{\xi}$ be the maximal ideal of the local ring $\mc{O}_{\mc{X},\xi}$, 
the \emph{Hilbert-Samuel function} $H_{\xi}:\NN\ra\NN$ of $\mc{O}_{\mc{X},\xi}/\mf{m}_{\xi}$ is defined by 
$$H_{\xi}(k)=\rk_{\mb{F}_p} ((\mf{m}_{\xi}/\mf{p}\mc{O}_{\mc{X},\xi})^k/(\mf{m}_{\xi}/\mf{p}\mc{O}_{\mc{X},\xi})^{k+1}).$$
Let $(q_{\xi}(m))_{m\geq 1}$ be the increasing sequence of integers such that the integer $k\in \NN$ appears exactly $H_{\xi}(k)$ times.
Let $Q_{\xi}(m)=q_{\xi}(1)+\cdots +q_{\xi}(m)$.

Since $\mc{X}$ is a proper $\mc{O}_K$-scheme, any point in $X(K)$ extend to an $\mc{O}_K$-point.
For any maximal ideal $\mf{p}$ of $\mc{O}_K$ and any $a\in \ZZ_{>0}$, denote by $A^{(a)}_{\mf{p}}$ the Artinian local ring $\mc{O}_{K,\mf{p}}/\mf{p}^a\mc{O}_{K,\mf{p}}$.
For any $\eta\in \mc{X}(A^{(a)}_{\mf{p}})$, denote by $S(X,B,\eta)$ the set of points in $S(X,B)$ whose reduction modulo $\mf{p}^a$ coincides with $\eta$.
For any maximal ideal $\mf{p}$ of $\mc{O}_K$, define 
$$S_1(X,B, \mf{p}):= \bigcup_{\text{regular }\xi\in \mc{X}(\mb{F}_{\mf{p}})} S(X, B, \xi).$$

\begin{definition}
Let $\ol{E}$ be a Hermitian vector bundle on $\Spec \mc{O}_K$. 
The \emph{$r$-th wedge product} $\Lambda^ r\ol{E}$ is defined to be the Hermitian vector bundle 
$$(\Lambda^r E, (\|\cdot \|_{\Lambda^r \ol{E}, \sigma })_{\sigma:K\hra \CC}),$$
where 
$$\|\eta \|_{\Lambda^r \ol{E}, \sigma }:=\inf \{\|x_1\|_{\ol{E}, \sigma}\cdots \|x_r\|_{\ol{E}, \sigma} : x_1,\dots, x_r \in E\otimes_K\CC_{\sigma}, \eta=x_1\wedge\cdots \wedge x_r\}.$$
\end{definition}

\subsection{Acknowledgement}
I thank Gal Binyamini for discussions that motivate this work.
I thank Zhenlin Ran, Jiacheng Xia and Roy Zhao for discussions on Arakelov geometry.
I also thank Tuen Wai Ng and Kwok Kin Wong for discussions on geometric function theory.
This work began while the author was staying at the Max-Planck-Institut f\"{u}r Mathematik in Bonn. 
It was partially supported by the New Cornerstone Science Foundation through the New Cornerstone Investigator Program awarded to Professor Xuhua He.

\section{Complex cellular covers}
\subsection{Complex cellular structures}
We recall the notion of complex cellular structures introduced by Binyamini-Novikov \cite[\S 2]{BN19}.
For $r\in \CC$ (resp. $r_1,r_2\in \CC$) with $|r|>0$ (resp. $|r_2|>|r_1|>0$), set 
$$D(r):=\{z\in \CC: |z|<|r|\}, \quad D_{\circ} (r):=\{z\in \CC:0<|z|<|r|\},$$
$$A(r_1,r_2):=\{z\in \CC: |r_1|<|z|<|r_2|\}, \quad *:=\{0\}.$$
For any $0<\delta<1$, define the $\delta$-extensions by
$$D^{\delta}:=D(\delta^{-1}r),\quad D^{\delta}_{\circ}:= D_{\circ}(\delta^{-1}r),$$
$$A^{\delta}(r_1,r_2):=A(\delta r_1, \delta^{-1}r_2), \quad *^{\delta}:=*.$$
Let $\mc{X}$ and $\mc{Y}$ be sets. 
Let $\mc{F}:\mc{X}\ra 2^{\mc{Y}}$ be a map taking points of $\mc{X}$ to subsets of $\mc{Y}$.
We denote
$$\mc{X}\odot \mc{F}:=\{(x,y): x\in \mc{X}, y\in \mc{F}(x)\}.$$
If $U$ is a complex manifold, we denote by $\mc{O}_b(U)$ the set of bounded holomorphic functions on $U$.
We denote $\tb{z}_{1..\ell}:=(z_1,\dots, z_\ell)$.
\begin{definition}
A \emph{complex cell $\mc{C}$  of length $0$} is the point $\CC^0$. 
The \emph{type} of $\mc{C}$ is the empty word. 
A \emph{complex cell $\mc{C}$ of length $\ell+1$} has the form $\mc{C}_{1..\ell}\odot \mc{F}$, 
where the base $\mc{C}_{1..\ell}$ is a cell of length $\ell$, and the fiber $\mc{F}$ is one of $*$, $D(r)$, $D_{\circ}(r)$, $A(r_1,r_2)$, where $r\in \mc{O}_b(\mc{C}_{1..\ell})$ satisfies $|r(\tb{z}_{1..\ell})|>0$ for $\tb{z}_{1..\ell}\in \mc{C}_{1..\ell}$; and $r_1,r_2\in \mc{O}_b(\mc{C}_{1..\ell})$ satisfy $0<|r_1(\tb{z}_{1..\ell})|<|r_2(\tb{z}_{1..\ell})|$ for $\tb{z}_{1..\ell}\in \mc{C}_{1..\ell}$.
The \emph{type} $\mc{T}(\mc{C})$ is $\mc{T}(\mc{C}_{1..\ell})$ followed by the type of the fiber $\mc{F}$.
\end{definition}

\begin{definition}
Let $\pmb{\delta}\in (0,1)^\ell$. The notion of \emph{$\pmb{\delta}$-extension} of a cell of length $\ell$ is defined inductively as follows:
A complex cell of length $0$ is defined to be its own {$\pmb{\delta}$-extension}.
A cell $\mc{C}$ of length $\ell+1$ admits a $\pmb{\delta}$-extension $\mc{C}^{\pmb{\delta}}:= \mc{C}_{1..\delta}^{\pmb{\delta}_{1..\ell}}\odot \mc{F}^{\pmb{\delta}_{\ell+1}}$ if $\mc{C}_{1..\ell}$ admits a $\pmb{\delta}_{1..\ell}$-extension, and if the function $r$ (resp. $r_1,r_2$) involved in $\mc{F}$ admits holomorphic continuation to $\mc{C}_{1..\ell}^{\pmb{\delta}_{1..\ell}}$ and satisfies $|r(\tb{z}_{1..\ell})|>0$ (resp. $0<|r_1(\tb{z}_{1..\ell})|<|r_2(\tb{z}_{1..\ell})|$) in this larger domain.
\end{definition}

\begin{definition}
Let $\mc{C}$ and $\widehat{\mc{C}}$ be two cells of length $\ell$. 
A holomorphic map $f: \mc{C}\ra \widehat{\mc{C}}$ is \emph{cellular} if it takes the form 
$w_j=\phi_j(\tb{z}_{1..j})$, 
where $\phi_j\in \mc{O}_b(\mc{C}_{1..j})$ for $j=1,\dots, \ell$, and moreover $\phi_j$ is a monic polynomial of positive degree in $z_j$.
\end{definition}

\subsection{Complex cellular covers}\label{Complex cellular covers}
\begin{definition}[{\cite[Definition 12]{BN19}}]
Let $\mc{C}^{\pmb{\delta}}$ be a cell and $\{f_j: \mc{C}_j^{\pmb{\delta'}}\ra \mc{C}^{\pmb{\delta}}\}$ be a finite collection of cellular maps. 
This collection is said to be a \emph{$(\pmb{\delta}',\pmb{\delta})$-cellular cover} of $\mc{C}$ if 
$\mc{C}\subset \bigcup_j f_j(\mc{C}_j)$.
If $(\pmb{\delta}',\pmb{\delta})$ are clear from the context we will speak simply of cellular covers.
\end{definition}

Let $\tau_r:\mb{A}^{M}_K\hookrightarrow \mb{P}^{M}_K$ be the standard affine charts, where $r=0,\dots, M$.
Let $\sigma: K\hra \CC$ be an embedding.
Let $\Delta^{M}$ be the $M$-dimensional unit polydisk centered at the origin of $\mb{A}^{M}_{\CC}$.
Let $X_r:=\iota^{-1}(\tau_r(\Delta^M))$.
By dividing the maximum of the norms of the projective coordinates, we know that $X(\CC)=\bigcup_{r=0}^MX_r$.

As in the notations of \cite[\S2.1]{BN19}, let $(\Delta^{M})^{1/2}$ be the product of $M$ open disks of radius $2$.
By the cellular parametrization theorem of Binyamini-Novikov \cite[Theorem 8]{BN19}, we obtain a cellular cover 
$$\{\mc{C}^{1/2}_{\gamma}\ra  \tau_r^{-1}(\iota(X(\CC)))\cap (\Delta^{M})^{1/2}\}$$ 
of size $\poly_{M}(d)$ admitting $1/2$-extensions, where $d$ is the degree of $X$.
Let $\mu$ be a positive integer.
Fix $0<\delta<1/2$.  
By \cite[Lemma 94]{BN19},  for each cell $\mc{C}^{1/2}_{\gamma}$ in this cover,
there exists a cellular cover $\{\mc{C}_{\beta}^{\pmb{\delta}_\beta}\ra \mc{C}^{1/2}_{\gamma}\}$ of size 
$\poly_{M} (\mu \log (1/\delta))\cdot \delta^{-2n}$,
where $\pmb{\delta}_{\beta}$ is given by $\delta$ in the $D$-coordinates and $\delta^{E_m \mu^{1+1/m}}$ in the $A$, $D_{\circ}$ coordinates, where $m$ is the number of $D$-fibers of $\mc{C}_{\beta}$, and 
$$E_m:=\frac{m}{m+1}(m!)^{1/m}.$$
We denote by $\{\phi_\alpha\}_{\alpha\in I_{\sigma}}$ the set of all the compositions 
$$\mc{C}_{\beta}^{\pmb{\delta}_\beta}\ra \mc{C}^{1/2}_{\gamma}
\ra \tau_r^{-1}(\iota(X(\CC)))\cap (\Delta^{M})^{1/2}
\xrightarrow{\iota^{-1}\circ \tau_r} X(\CC)$$ of cellular maps obtained above.
Here $I_{\sigma}$ is the index set consisting of triples $(\beta, \gamma, r)$.
The cardinality of $I_{\sigma}$ is 
\begin{equation}\label{size of cellular cover}
(M+1)\cdot \poly_M(d)\cdot \poly_{M} (\mu \log (1/\delta))\cdot \delta^{-2n}.
\end{equation}
The set $X(\CC)$ is covered by the images of $\phi_{\alpha}$, $\alpha\in I_{\sigma}$.

\section{The evaluation map and slope method}
Let $D$ be a positive integer. 
We equip the $\mc{O}_K$-module $H^0(\mb{P}^M_{\mc{O}_K},\mc{O}(1)^{\otimes D})$ with the John norm attached to the sup-norm $\|\cdot\|_{\sigma, \sup}$ \cite[Notation 12]{Chen12b}.
Let $F_D$ be the saturation in $H^0(\mc{X}, \mc{O}_{\mc{X}}(1)^{\otimes D})$ of the image of $H^0(\mb{P}^M_K,\mc{O}(1)^{\otimes D})$ in $H^0(X, \mc{O}_X(1)^{\otimes D})$, i.e. the largest sub-$\mc{O}_K$-module $F_D$ of $H^0(\mc{X}, \mc{O}_{\mc{X}}(1)^{\otimes D})$ such that $F_{D,K}$ is the image of $H^0(\mb{P}^M_K,\mc{O}(1)^{\otimes D})$ in $H^0(X, \mc{O}_X(1)^{\otimes D})$.
Equip $F_D$ with the quotient metrics induced by the metrics of $H^0(\mb{P}^M_K,\mc{O}(1)^{\otimes D})$, so that $\ol{F_D}$ becomes a Hermitian vector bundle on $\Spec \mc{O}_K$.
Let $\sigma: K\hra \CC$ be as in Section \ref{Complex cellular covers}.
Let $\alpha\in I_{\sigma}$.
Let $\mu$ be a positive integer.
Let $P_1,\dots, P_\mu$ be $K$-points in the image of $\phi_{\alpha}$. 
Since $\mc{X}$ is a proper $\mc{O}_K$-scheme, the points $P_i$ extend to $\mc{O}_K$-points.
We have the \emph{evaluation map}
$$H^0(\mc{X},O_\mc{X}(1)^{\otimes D})  \ra  \bigoplus_{i= 1}^\mu P_i^* \mc{O}_\mc{X}(1)^{\otimes D}$$
defined by pulling back sections.
Let $$\Phi:F_{D,K}\ra \bigoplus_{i= 1}^\mu P_i^*\mc{O}_X(1)^{\otimes D}$$ be the restriction to the generic fiber $F_{D,K}$ of the evaluation map.
Let $r_1(D)$ be the rank of $F_D$.

\begin{lemma}[{\cite[\S 2.1]{Chen 12b}}]\label{slope, heights and determinants}
If $\Phi$ is an isomorphism, then
$$\frac{\widehat{\mu}(\ol{F_D})}{D}\leq \sup_{i\in I} \frac{\log H(P_i)}{[K:\QQ]}+\frac{1}{Dr_1(D)} h(\Lambda^{r_1(D)} \Phi),$$
where
$$h(\Lambda^{r_1(D)}\Phi):=\frac{1}{[K:\QQ]} \left(\sum_{\mf{p}} \log \| \Lambda^{r_1(D)} \Phi\|_{\mf{p}}+\sum_{\sigma:K\hra \CC} \log \|\Lambda^{r_1(D)} \Phi\|_{\sigma}\right).$$
\end{lemma}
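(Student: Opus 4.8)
The plan is to recognize the statement as Bost's slope inequality applied to the evaluation map $\Phi$, in the case where $\Phi$ is an isomorphism: then the comparison of the Arakelov degree of $\ol{F_D}$ with those of the pulled-back line bundles $\mc{P}_i^*\ol{\mc{L}}$ is governed by the determinant of $\Phi$, and the identity can be made completely explicit. First I would record that, $\Phi$ being an isomorphism, $F_{D,K}$ and $\bigoplus_{i=1}^\mu P_i^*\mc{O}_X(1)^{\otimes D}$ have equal $K$-dimension, so $r_1(D)=\mu$ and $I=\{1,\dots,\mu\}$; write $r:=r_1(D)$. I would then put on the target module $\bigoplus_{i=1}^r\mc{P}_i^*\mc{O}_\mc{X}(1)^{\otimes D}$ the orthogonal direct sum of the Hermitian metrics obtained by pulling $\ol{\mc{L}}^{\otimes D}$ back along the $\mc{O}_K$-points $\mc{P}_i$, producing a Hermitian vector bundle $\ol{G}$ on $\Spec\mc{O}_K$; with this choice the quantities $\|\Lambda^r\Phi\|_\sigma$, $\|\Lambda^r\Phi\|_\mf{p}$ occurring in the statement are the operator norms of $\Lambda^r\Phi$ relative to the metrics and lattices of $\Lambda^r\ol{F_D}$ and $\Lambda^r\ol{G}$.

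The heart of the proof is then a computation with rank-one objects. The determinant $\det\Phi:=\Lambda^r\Phi$ is a $K$-linear isomorphism $\Lambda^r\ol{F_D}\to\Lambda^r\ol{G}$ of Hermitian line bundles over $\Spec\mc{O}_K$. Two routine identities reduce everything to line bundles: $\widehat{\deg}(\Lambda^r\ol{F_D})=\widehat{\deg}(\ol{F_D})$ for a determinant bundle, and, since $\Lambda^r$ of an orthogonal direct sum of $r$ line bundles is their tensor product with the tensor metric,
$$\widehat{\deg}(\Lambda^r\ol{G})=\sum_{i=1}^r\widehat{\deg}\big((\mc{P}_i^*\ol{\mc{L}})^{\otimes D}\big)=D\sum_{i=1}^r\widehat{\deg}(\mc{P}_i^*\ol{\mc{L}})=D\sum_{i=1}^r\log H(P_i),$$
the last equality being Definition \ref{height of a point} together with $\widehat{\mu}=\widehat{\deg}/[K:\QQ]$ for rank one. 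The key input is the fundamental identity of Arakelov geometry (equivalently, the slope inequality, here an equality for an isomorphism): for a $K$-linear isomorphism $\psi:\ol{L_1}\xrightarrow{\sim}\ol{L_2}$ of Hermitian line bundles over $\Spec\mc{O}_K$,
$$\widehat{\deg}(\ol{L_1})=\widehat{\deg}(\ol{L_2})+\sum_\mf{p}\log\|\psi\|_\mf{p}+\sum_{\sigma:K\hra\CC}\log\|\psi\|_\sigma,$$
which follows from the product formula applied to the scalar realizing $\psi$. Applying this with $\psi=\det\Phi$ yields $\widehat{\deg}(\ol{F_D})=D\sum_{i=1}^r\log H(P_i)+[K:\QQ]\,h(\Lambda^r\Phi)$. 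Dividing by $D\,r\,[K:\QQ]$, using $\widehat{\mu}(\ol{F_D})=\widehat{\deg}(\ol{F_D})/([K:\QQ]\,r)$, and replacing the average $\frac1r\sum_{i=1}^r\log H(P_i)$ by $\sup_i\log H(P_i)$ gives exactly the asserted inequality.

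Aside from the elementary determinant algebra, the step that demands care is the normalization bookkeeping: the factor $[K:\QQ]$, the treatment of real versus complex archimedean places in the definition of $\widehat{\deg}$ and in the product formula, and the convention for the operator norm at a finite place with respect to the \emph{saturated} lattice $F_D$ (the integral evaluation map being defined over $\mc{O}_K$, these finite contributions come out nonpositive). The conceptually important point to pin down is that the metric implicitly placed on $\bigoplus_i P_i^*\mc{O}_X(1)^{\otimes D}$ in the statement is the orthogonal direct sum of pulled-back Fubini--Study metrics: this is what both makes the $\ol{G}$-term equal to $D\sum_i\log H(P_i)$ and turns the comparison into an equality rather than a mere inequality.
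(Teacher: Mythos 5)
Your proposal is correct and is essentially the same argument that the paper cites from Chen's \S 2.1: it is Bost's slope method in the isomorphism case, reduced via $\Lambda^{r_1(D)}$ to the product-formula identity for a $K$-linear isomorphism of Hermitian line bundles, with the orthogonal direct sum metric on the target giving $\widehat{\deg}(\Lambda^r\ol{G})=D\sum_i\log H(P_i)$, followed by bounding the average by the supremum. Your remarks on the finite-place normalization and on the choice of metric on $\bigoplus_i P_i^*\mc{O}_X(1)^{\otimes D}$ address exactly the points that need care.
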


Let $\mu$ be a positive integer, fix $0< \delta<1/2$, and let $E_m$ be as in Section \ref{Complex cellular covers}.
The following lemma rephrases Lemma 93 in Binyamini-Novikov \cite{BN19} in terms of the evaluation map.

\begin{lemma}\label{Upper bound of complex norm of determinant}
Suppose $P_1,\dots, P_\mu$ are $K$-points in the image of $\phi_{\alpha}$.
Suppose $\Phi$ is an isomorphism.
We have 
$$\|\Lambda^{\mu} \Phi\|_{\sigma}\leq \mu^{c_1(M)\mu} \delta^{E_m \mu^{1+1/m} -c_2(M)\mu},$$
where $m$ is the number of $D$-fibers of $\mc{C}_{\beta}$ (which is at most $n$) if $m\geq 1$; and $m=1$ if there are no $D$-fibers, and $c_1(M)>0$ and $c_2(M)$ are some constants depending only on $M$.
\end{lemma}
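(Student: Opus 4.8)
The plan is to extract the determinant bound directly from Lemma 93 of Binyamini-Novikov \cite{BN19} by re-expressing the determinant of the evaluation map $\Phi$ in coordinates adapted to the cell $\mc{C}_\beta$. First I would choose a basis $s_1,\dots,s_\mu$ of $F_{D,K}$ together with the identifications $P_i^*\mc{O}_X(1)^{\otimes D}\cong K$ coming from the chosen sections trivializing $\mc{O}(1)$ on the affine chart $\tau_r$; after this choice, $\Lambda^\mu\Phi$ becomes (up to the norms of the chosen trivializing sections, which are controlled by the sup-norm on the unit polydisk) the determinant of the matrix $(s_j(P_i))_{i,j}$ of values of $D$-degree polynomials at the $\mu$ points. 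Pulling back along the cellular map $\phi_\alpha:\mc{C}_\beta^{\pmb\delta_\beta}\to X(\CC)$ expresses each $s_j\circ\phi_\alpha$ as a bounded holomorphic function on the cell, of "degree" controlled by $D$ composed with the cellular maps. This is exactly the setup of Lemma 93 in \emph{op. cit.}, which bounds the determinant of values of such functions at $\mu$ points lying in a cell admitting a $\pmb\delta_\beta$-extension.

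Next I would feed in the numerics. The cellular cover constructed in Section \ref{Complex cellular covers} was arranged precisely so that $\mc{C}_\beta$ admits a $\pmb\delta_\beta$-extension with $\pmb\delta_\beta$ equal to $\delta$ in the $D$-coordinates and $\delta^{E_m\mu^{1+1/m}}$ in the $A$ and $D_\circ$-coordinates, where $m$ is the number of $D$-fibers (or $m=1$ if there are none) and $E_m=\frac{m}{m+1}(m!)^{1/m}$. Substituting these into the conclusion of Lemma 93, the contribution of the $D$-coordinates produces a factor of the shape $\delta^{-c_2(M)\mu}$ (a bounded power of $\delta^{-1}$ per point, since the extension ratio there is only $\delta$), while the $A$ and $D_\circ$-coordinates, whose extension ratio is the much smaller $\delta^{E_m\mu^{1+1/m}}$, produce the gain $\delta^{E_m\mu^{1+1/m}}$; the combinatorial overhead of the cellular map (degrees, number of coordinates, which are all $\poly_M(d)$ and hence absorbed, together with the binomial factors from expanding a $\mu\times\mu$ determinant) is swallowed into the factor $\mu^{c_1(M)\mu}$. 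Care is needed to check that the exponent $E_m\mu^{1+1/m}$ that appears is governed by the number of $D$-fibers and not, say, the total length of the cell; this is the content of the Archimedean Vandermonde-type estimate in \emph{op. cit.} and one simply cites it, checking that the degree of the polynomials (namely $D$, to be taken of size $d\log B$ later) enters only through the $\poly$ and $\mu^{c_1(M)\mu}$ factors, not through the exponent of $\delta$.

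The main obstacle, and the step requiring the most care, is the bookkeeping of trivializations and norms: Lemma 93 of \cite{BN19} is a statement about a determinant of complex-analytic functions on a cell, whereas $\|\Lambda^\mu\Phi\|_\sigma$ is defined via the quotient John norm on $\ol{F_D}$ and the metrics on the $P_i^*\mc{O}_X(1)^{\otimes D}$. One must verify that passing from one to the other costs only a factor of the form $\mu^{O_M(\mu)}$ (from comparing the John norm with the sup-norm on $H^0(\mb{P}^M,\mc{O}(1)^{\otimes D})$, which is polynomial in the dimension of that space and hence subsumed, and from the fact that on the unit polydisk the chosen trivializing sections have norms bounded above and below by $1$), and that the restriction to $X_r$ and the cellular reparametrization do not distort these norms beyond such factors. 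Once this comparison is in place, the inequality is just the substitution of $\pmb\delta_\beta$ into \cite[Lemma 93]{BN19}, with $c_1(M)$ collecting the combinatorial constants and $c_2(M)$ the number of $D$-coordinates times the per-coordinate loss.
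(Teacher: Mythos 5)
Your plan follows essentially the same route as the paper's proof: the bound is obtained by applying \cite[Lemma 93]{BN19} to the pulled-back sections on the cell, using the comparison between the John norm on $F_D$ and the sup norm to pass to bounded holomorphic functions of norm $\leq 1$. Two small remarks: the paper handles the wedge norm by decomposing a unit vector $s=s_1\wedge\cdots\wedge s_\mu$ so that $\|s_1\|_\sigma\cdots\|s_\mu\|_\sigma$ approximates the infimum defining $\|\cdot\|_{\Lambda^\mu\ol{F_D},\sigma}$ (rather than by choosing a basis, which would need to be orthonormal for the argument to be exact); and your attribution of the $\delta^{E_m\mu^{1+1/m}}$ gain to the $A,D_\circ$-coordinates and the $\delta^{-c_2(M)\mu}$ loss to the $D$-coordinates appears reversed --- the Vandermonde-type gain indexed by $m$ comes from the $m$ disk fibers, while the $A,D_\circ$-fibers are given the much larger extension $\delta^{E_m\mu^{1+1/m}}$ precisely so that they do not spoil that gain --- though since you invoke Lemma 93 as a black box this does not affect the final estimate.
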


\begin{proof}
Suppose $s\in \bigwedge^\mu F_{D,K}$ with $\|s\|_{\sigma}=1$. Write $s:=s_1\wedge \cdots \wedge s_\mu$, where  $\|s_1\|_{\sigma}\cdots \|s_\mu\|_{\sigma}= 1+\varepsilon$ for some $\varepsilon >0$ for all $i$. 
By rescaling, we can assume that $\|s_i\|_{\sigma}=(1+\varepsilon)^{1/\mu}$.
Write $\alpha=(\beta, \gamma,r )$.
Let $x_1,\dots, x_{\mu}$ be the points in $\mc{C}_{\beta}^{\pmb{\delta}_\beta}$ such that $\phi_{\alpha}(x_i)=P_i$.
By \cite[Lemma 93]{BN19} and the fact that the sup-norm is at most the John norm \cite[Notation 12]{Chen12b}, the determinant of the matrix $(s_j(P_i))_{j,i}=(s_j(\phi_{\alpha}(x_i)))_{j,i}$ has complex norm bounded by $(1+\varepsilon)\mu^{O_M(\mu)} \delta^{E_m \mu^{1+1/m} -O_M(\mu)}$.
Thus, we have the same bound for $\|\Phi(s_1)\|_{\sigma}\cdots \|\Phi(s_\mu)\|_{\sigma}$.
Since $\varepsilon$ is arbitrary, we have $\|\Lambda^{\mu} \Phi(s)\|_{\sigma}\leq \mu^{O_M(\mu)} \delta^{E_m \mu^{1+1/m} -O_M(\mu)}$.
From the proof of Lemma 93 in Binyamini-Novikov \cite{BN19}, we see that $c_1(M)>0$.
\end{proof}

The following lemma is similar to \cite[Theorem 3.1]{Chen12b}, but we require the $K$-points to be contained in the same complex cell for each $\sigma:K\hra \CC$. The contributions from the Archimedean places in the estimate are given by Lemma \ref{Upper bound of complex norm of determinant} instead. 

\begin{lemma}\label{Slope, height, reduction modulo prime power}
Let $S=(\mf{p}_j)_{j\in J}$ be a finite family of maximal ideals of $K$ and $(a_j)_{j\in J}\in \ZZ_{>0}^{J}$.
For each $\mf{p}_j$, let $\eta_j$ be a point in $\mc{X}(A^{(a_j)}_{\mf{p}_j})$ whose reduction modulo $\mf{p}_j$ is denoted by $\xi_j$.
For each $\sigma:K\hra \CC$, choose $\alpha(\sigma)\in I_{\sigma}$.
Set $\mu=r_1(D)$. Let $0<\delta<1/2$.
Consider a family $(P_i)_{i\in I}$ of $K$-points such that 
\begin{itemize}
\item for any $i\in I$, and any $\sigma$, the point $P_i$ is in the image of $\phi_{\alpha(\sigma)}$, and
\item  for any $i\in I$, and any $j\in J$, the reduction of $P_i$ modulo $\mf{p}_j^{a_j}$ coincides with $\eta_j$.
\end{itemize}
Suppose there does not exist a hypersurface in $\mb{P}^M_K$ of degree $D$, not contained in $X$, containing all the points $P_i$, $i\in I$.
Then
\begin{align*}
\frac{\widehat{\mu}(\ol{F_D})}{D}\leq \sup_{i\in I} \frac{\log H(P_i)}{[K:\QQ]}
&+\frac{1}{Dr_1(D)}\log r_1(D)^{c_1(M)r_1(D)} \delta^{E_m r_1(D)^{1+1/m} -c_2(M)r_1(D)}
\\&-\frac{1}{[K:\QQ]}\sum_{j\in J}\frac{Q_{\xi_j}(r_1(D))}{Dr_1(D)}\log N_{\mf{p}_j}^{a_j}.
\end{align*}
\end{lemma}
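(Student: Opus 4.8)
The plan is to deduce the inequality from three inputs: the slope inequality of Lemma~\ref{slope, heights and determinants}, the Archimedean determinant bound of Lemma~\ref{Upper bound of complex norm of determinant}, and the non-Archimedean valuation estimate underlying \cite[Theorem 3.1]{Chen12b}. \emph{First, reduction to the isomorphism case.} The hypothesis that no degree-$D$ hypersurface of $\mb P^M_K$ not containing $X$ passes through all the $P_i$ is exactly the statement that the evaluation map $F_{D,K}\to\bigoplus_{i\in I}P_i^*\mc O_X(1)^{\otimes D}$ is injective: a nonzero element of its kernel is the restriction to $X$ of a degree-$D$ form $f$ with $f|_X\neq 0$ vanishing at every $P_i$, i.e.\ defines such a hypersurface. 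Since $\dim_K F_{D,K}=r_1(D)$ this forces $|I|\geq r_1(D)$, and we may choose a subset $I'\subseteq I$ with $|I'|=r_1(D)$ for which the composite $\Phi'\colon F_{D,K}\to\bigoplus_{i\in I'}P_i^*\mc O_X(1)^{\otimes D}$ is an isomorphism. It is enough to prove the asserted bound with $I$ replaced by $I'$: the final sum over $j\in J$ does not involve $I$, the middle term depends only on $D,\delta$ and on the fixed cells $\mc C_{\beta(\sigma)}$, and $\sup_{i\in I'}\log H(P_i)\leq\sup_{i\in I}\log H(P_i)$. So we relabel $I'$ as $I$ and assume $\Phi$ is an isomorphism with $\mu=r_1(D)=|I|$.

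Next I would apply Lemma~\ref{slope, heights and determinants} to this $\Phi$ and bound $h(\Lambda^{r_1(D)}\Phi)=\frac1{[K:\QQ]}\bigl(\sum_{\mf p}\log\|\Lambda^{r_1(D)}\Phi\|_{\mf p}+\sum_{\sigma}\log\|\Lambda^{r_1(D)}\Phi\|_{\sigma}\bigr)$ place by place. For the Archimedean places, fix $\sigma\colon K\hra\CC$; the points $P_i$ ($i\in I$) lie in the image of $\phi_{\alpha(\sigma)}$ and $\Phi$ is an isomorphism, so Lemma~\ref{Upper bound of complex norm of determinant} (with $\mu=r_1(D)$) gives
$$\log\|\Lambda^{r_1(D)}\Phi\|_{\sigma}\leq\log\bigl(r_1(D)^{c_1(M)r_1(D)}\,\delta^{E_m r_1(D)^{1+1/m}-c_2(M)r_1(D)}\bigr),$$
where $m$ is the number of $D$-fibres of $\mc C_{\beta(\sigma)}$; if this varies with $\sigma$ one takes $m$ to be the value minimising $E_m r_1(D)^{1+1/m}$ over the finitely many cells that occur, which only weakens the bound since $\delta<1$. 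Summing over the $[K:\QQ]$ embeddings and feeding the result into the slope inequality produces exactly the middle term of the right-hand side.

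For the non-Archimedean places I would argue as in \cite[Theorem 3.1]{Chen12b}. For every maximal ideal $\mf p$ the evaluation map sends the lattice $F_{D,\mf p}$ into the integral lattice $\bigoplus_i(P_i^*\mc O_{\mc X}(1)^{\otimes D})_{\mf p}$, since integral sections pulled back along $\mc O_K$-points stay integral; hence $\Lambda^{r_1(D)}\Phi$ is $\mf p$-integral and $\log\|\Lambda^{r_1(D)}\Phi\|_{\mf p}\leq 0$. For $\mf p=\mf p_j\in S$ one needs the sharper estimate $v_{\mf p_j}(\det)\geq a_j\,Q_{\xi_j}(r_1(D))$ for the determinant of the evaluation matrix, taken with respect to an $\mc O_{K,\mf p_j}$-basis of $F_{D,\mf p_j}$ adapted to the filtration by order of vanishing at $\xi_j$ in the special fibre, so that the basis sections vanish at $\xi_j$ to orders $q_{\xi_j}(1),\dots,q_{\xi_j}(r_1(D))$ (this is where the Hilbert–Samuel data $Q_{\xi_j}$ enters). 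Expanding these sections in local coordinates at $\xi_j$ and performing column operations using that all $P_i$ reduce to the common point $\eta_j\in\mc X(A^{(a_j)}_{\mf p_j})$ — i.e.\ are congruent modulo $\mf p_j^{a_j}$ — yields this valuation bound, equivalently $\log\|\Lambda^{r_1(D)}\Phi\|_{\mf p_j}\leq -a_j\,Q_{\xi_j}(r_1(D))\log N_{\mf p_j}$. Summing over $\mf p$ gives $\sum_{\mf p}\log\|\Lambda^{r_1(D)}\Phi\|_{\mf p}\leq-\sum_{j\in J}Q_{\xi_j}(r_1(D))\log N_{\mf p_j}^{a_j}$, and inserting this into the slope inequality produces the last term. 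Adding the three contributions yields the stated inequality.

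The only substantive step is the non-Archimedean valuation estimate $v_{\mf p_j}(\det)\geq a_j\,Q_{\xi_j}(r_1(D))$ — the determinant method proper — and this is already contained in \cite[Theorem 3.1]{Chen12b} and is used here without change; the novelty in the present lemma is simply that the Archimedean contribution now comes from the complex-cellular bound of Lemma~\ref{Upper bound of complex norm of determinant} rather than from a crude volume estimate, so the main care required is the bookkeeping that matches that bound — in particular a uniform choice of the exponent $m$ across all embeddings $\sigma$ — to the shape of the conclusion.
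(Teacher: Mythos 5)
Your proposal is correct and follows essentially the same route as the paper: reduce to an isomorphism $\Phi$ by restricting to $r_1(D)$ points, plug Lemma~\ref{Upper bound of complex norm of determinant} into the Archimedean part of the slope inequality of Lemma~\ref{slope, heights and determinants}, use integrality for $\log\|\Lambda^{\mu}\Phi\|_{\mf p}\leq 0$ at all finite places, and invoke the sharper $p$-adic bound from the proof of \cite[Theorem 3.1]{Chen12b} at each $\mf p_j$. Your extra observation about making a uniform choice of $m$ across the embeddings $\sigma$ (taking the one minimising $E_m r_1(D)^{1+1/m}$, which weakens each individual bound since $\delta<1$) is a reasonable way to make explicit what the paper leaves implicit and matches the way the lemma is later consumed (``for some $1\le m\le n$'').
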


\begin{proof}
Suppose there does not exist a hypersurface in $\mb{P}^M_K$ of degree $D$, not contained in $X$, containing all the points $P_i$, $i\in I$.
Then the evaluation map 
$$F_{D,K}\ra \bigoplus_{i\in I} P_i^*\mc{O}_X(1)^{\otimes D}$$
is injective.
By replacing $I$ by some finite subset $\{P_1,\dots, P_{\mu}\}$, where $\mu=r_1(D)$, we get an isomorphism from the evaluation map.
Since $P_i$ extends to $\mc{O}_K$-points, we have $\log \|\Lambda^{\mu} f\|_{\mf{p}}\leq 0$ for all finite places $\mf{p}$.
In the proof of Theorem 3.1 of \cite{Chen12b}, for all $j\in J$,
$$\log \|\Lambda^{\mu} f\|_{\mf{p}_j}\leq -Q_{\xi_j}(r_1(D))\log N_{\mf{p}_j}^{a_j}.$$
The lemma then follows from Lemma \ref{slope, heights and determinants} and Lemma \ref{Upper bound of complex norm of determinant}.
\end{proof}

\section{Hypersurface coverings of rational points}
The statement of the following lemma is analogous to \cite[Corollary 3.7]{Chen12b}. We remove the factor $1+\varepsilon$ in \emph{loc. cit.} by restricting to intersections of complex cells.
In the proof, we will apply the estimate in Lemma \ref{Slope, height, reduction modulo prime power}.
Certain terms and factors in this estimate will offset against those coming from the finite places.

\begin{lemma}\label{auxiliary hypersurface}
Let $(\mf{p}_j)_{j\in J}$ be a finite family of maximal ideals of $K$ and $(a_j)_{j\in J}\in \ZZ_{>0}^{J}$.
For any $j\in J$, let $\xi_j\in \mc{X}(\mb{F}_{\mf{p}_j})$ be a regular $K$-point of $\mc{X}(\mb{F}_{\mf{p}_j})$,
and let $\eta_j\in \mc{X}(A_{\mf{p}_j}^{(a_j)})$ whose reduction modulo $\mf{p}_j$ is $\xi_j$. Let 
\begin{equation}\label{Choosing delta}
\delta=\min\left\{\frac{1}{4}, \exp \left(\frac{-4(d-2)-2(n+3)d^{-1/n}-2d[K:\QQ] (2+\log(n+1))}{d[K:\QQ] d^{1/n} n!^{-1/n}}\right)\right\}.
\end{equation}
Let $D>d-2$ be a positive integer such that for all $1\leq m\leq n$,
\begin{equation}\label{positivity}
E_m \frac{d^{1/n}}{n!^{1/n}}\frac{D-d+2}{D}-\frac{c_2(M)}{D}>\frac{1}{2}E_m \frac{d^{1/n}}{n!^{1/n}}>0,
\end{equation}
\begin{equation}\label{assumption on D, bounded}
\frac{1}{D} c_1(M) \log \frac{d(D+n)^n}{n!}<1,
\end{equation}
and
\begin{equation}\label{another assumption on D}
 d^{1/n} \frac{n}{n+1}\left(1-\frac{d-2}{D}\right)-\frac{n+3}{2n+2}\cdot \frac{n}{D}>0.
\end{equation}
Set $B=e^{D/d}$ and $\mu=r_1(D)$.
For each $\sigma:K\hra \CC$, let $\phi_{\alpha(\sigma)}: \mc{C}_{\beta(\sigma)}^{\pmb{\delta}_{\beta(\sigma)}}\ra X(\CC)$ be one of the map constructed in Section \ref{Complex cellular covers}. 
If 
\begin{equation}\label{local ring mod prime cardinality assumption}
\sum_{j\in J} \log N^{a_j}_{\mf{p}_j} \cdot \frac{n}{n+1} d^{1/n}\geq
 \log B,
\end{equation}
then there exists a hypersurface of degree $D$ of $\mb{P}^M_K$ not containing $X$ which contains 
$$\left(\bigcap_{j\in J}S(X,B, \eta_j)\right)\cap \left(\bigcap_{\sigma:K\hra\CC} \phi_{\alpha(\sigma)}( \mc{C}_{\beta(\sigma)}^{\pmb{\delta}_{\beta(\sigma)}})\right).$$
\end{lemma}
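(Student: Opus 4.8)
The plan is to argue by contradiction: assume no hypersurface of degree $D$ not containing $X$ passes through all the points in the displayed intersection, and then apply Lemma \ref{Slope, height, reduction modulo prime power} with $S = (\mf{p}_j)_{j\in J}$, the chosen $\eta_j$, and the chosen cellular maps $\phi_{\alpha(\sigma)}$. Since every point $P_i$ in the intersection lies in the image of each $\phi_{\alpha(\sigma)}$ (second factor of the intersection) and has reduction $\eta_j$ mod $\mf{p}_j^{a_j}$ (first factor), the hypotheses of that lemma are met with $\mu = r_1(D)$, and we obtain the slope inequality
\begin{align*}
\frac{\widehat{\mu}(\ol{F_D})}{D} \leq \sup_i \frac{\log H(P_i)}{[K:\QQ]}
&+ \frac{1}{Dr_1(D)}\log\left(r_1(D)^{c_1(M)r_1(D)} \delta^{E_m r_1(D)^{1+1/m}-c_2(M)r_1(D)}\right)\\
&- \frac{1}{[K:\QQ]}\sum_{j\in J}\frac{Q_{\xi_j}(r_1(D))}{Dr_1(D)}\log N_{\mf{p}_j}^{a_j}.
\end{align*}
Since $P_i \in S(X,B,\eta_j)$ we have $H(P_i) \le B = e^{D/d}$, so $\sup_i \log H(P_i)/[K:\QQ] \le D/(d[K:\QQ])$. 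The goal is to show this inequality is violated, i.e. to produce a lower bound for $\widehat{\mu}(\ol{F_D})/D$ and upper bounds for the other two terms whose combination is contradictory.

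The key inputs are the standard estimates from Chen's work. First, a lower bound for $\widehat{\mu}(\ol{F_D})$: the Hilbert–Samuel / arithmetic-ampleness asymptotics give $\widehat{\mu}(\ol{F_D}) \ge \frac{D}{d+1}\cdot\frac{?}{[K:\QQ]}$-type estimates; more precisely, one expects $\widehat{\mu}(\ol{F_D})/D \gtrsim -O(\log D)$ terms plus a main term controlled by the Arakelov height of $X$, with $r_1(D) = \binom{D+n}{n}\frac{d}{?}$... — here I would quote the precise form $r_1(D) = \frac{d D^n}{n!} + O(D^{n-1})$ and the slope bound from \cite{Chen12b}. Second, the regularity of $\xi_j$ forces the Hilbert–Samuel function $H_{\xi_j}$ to be that of a regular local ring of dimension $n$, so $Q_{\xi_j}(r_1(D))$ has the sharp asymptotic $Q_{\xi_j}(r_1(D)) \sim \frac{n}{n+1} r_1(D)^{1+1/n} (n!/d)^{1/n}$, equivalently $\frac{Q_{\xi_j}(r_1(D))}{Dr_1(D)} \gtrsim \frac{n}{n+1} d^{1/n}\cdot\frac{1}{?}$ after substituting $r_1(D) \approx dD^n/n!$; this is exactly where the exponent $\frac{n}{n+1}d^{1/n}$ in hypothesis \eqref{local ring mod prime cardinality assumption} comes from. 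Combining, the finite-place sum $\sum_j \frac{Q_{\xi_j}(r_1(D))}{Dr_1(D)}\log N_{\mf{p}_j}^{a_j}$ is bounded below by roughly $\frac{n}{n+1}d^{1/n}\sum_j \log N_{\mf{p}_j}^{a_j} \ge \log B = D/d$ by \eqref{local ring mod prime cardinality assumption}, which cancels the height term $\log B$.

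What remains is the Archimedean term $\frac{1}{Dr_1(D)}\big(c_1(M)r_1(D)\log r_1(D) + (E_m r_1(D)^{1+1/m} - c_2(M)r_1(D))\log\delta\big)$. Since $0<\delta<1/2$, $\log\delta < 0$, so the sign of the dominant summand $\frac{E_m r_1(D)^{1+1/m}}{Dr_1(D)}\log\delta = \frac{E_m r_1(D)^{1/m}}{D}\log\delta$ is negative, which is favorable; hypothesis \eqref{positivity} is designed precisely so that $E_m r_1(D)^{1/m}/D - c_2(M)/D$ stays bounded below by $\frac12 E_m (d/n!)^{1/n}$ (using $r_1(D)^{1/m}/D \gtrsim (d/n!)^{1/n}$ via $r_1(D)\approx dD^n/n!$ and $m\le n$, with the edge case $m=1$, no $D$-fibers, handled separately). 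Meanwhile \eqref{assumption on D, bounded} ensures the positive error $\frac{c_1(M)\log r_1(D)}{D} < 1$ using $r_1(D) \le d(D+n)^n/n!$. Then the explicit choice of $\delta$ in \eqref{Choosing delta} — whose exponent is engineered so that $\frac{d^{1/n}}{n!^{1/n}}\cdot\frac12 E_m \cdot |\log\delta|$ dominates $\frac{1}{d[K:\QQ]}$ plus all the bounded error terms (the $-4(d-2)$, $(n+3)d^{-1/n}$, $2d[K:\QQ](2+\log(n+1))$ pieces in the numerator are exactly the accumulated $O(1/D)$ and height-slope corrections) — makes the whole right-hand side strictly less than the lower bound for $\widehat{\mu}(\ol{F_D})/D$, a contradiction. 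Hypothesis \eqref{another assumption on D} plays the role of keeping the main cancellation $\frac{n}{n+1}d^{1/n}(1 - (d-2)/D)$ strictly positive so the finite-place gain genuinely exceeds the height. I expect the main obstacle to be bookkeeping: tracking the $O(D^{n-1})$ corrections to $r_1(D)$ and to $Q_{\xi_j}$, and to $\widehat{\mu}(\ol{F_D})$, carefully enough that the four numbered hypotheses \eqref{positivity}–\eqref{another assumption on D} and the formula \eqref{Choosing delta} for $\delta$ are exactly what is needed — i.e. verifying that the constants chosen in the statement are consistent and sufficient, rather than any single conceptual difficulty.
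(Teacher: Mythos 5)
Your architecture matches the paper's proof exactly: argue by contradiction, apply Lemma \ref{Slope, height, reduction modulo prime power}, then push the slope inequality into a contradiction with \eqref{Choosing delta} using Chen's bounds. The places you left with question marks are, however, load-bearing, and one of your guesses points in the wrong direction. The lower bound for $\widehat{\mu}(\ol{F_D})/D$ used in the paper is \emph{not} an asymptotic of the form ``$-O(\log D)$ plus a main term controlled by the Arakelov height of $X$.'' It is the simple \emph{constant} bound $\widehat{\mu}(\ol{F_D})/D \geq -\tfrac12\log(n+1)$ from \cite[\S 2.4]{Chen12b}, valid uniformly in $D$. This matters: the whole point of the lemma is to choose $\delta$ independent of $B$ (equivalently of $D$), so any $D$-dependent error on the slope side would propagate into \eqref{Choosing delta} and wreck the later counting argument in Theorem \ref{main theorem restated}. (The height-dependent lower bound on $\widehat{\mu}(\ol{F_D})$ does appear in Chen's work, but it is used in the small-$B$ regime in the proof of Theorem \ref{main theorem restated}, not here.)

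The second fuzzy spot is the Hilbert--Samuel estimate. Regularity of $\xi_j$ gives, via \cite[Prop.~3.5]{Chen12b},
\[
\frac{Q_{\xi_j}(r_1(D))}{Dr_1(D)}\;\geq\; (n!)^{1/n}\,\frac{n}{n+1}\,\frac{r_1(D)^{1/n}}{D}\;-\;\frac{n+3}{2n+2}\cdot\frac{n}{D},
\]
with an $(n!)^{1/n}$ prefactor, not $(n!/d)^{1/n}$ as in your sketch. The $d^{1/n}$ that appears in \eqref{local ring mod prime cardinality assumption} and \eqref{another assumption on D} only emerges after one substitutes the Sombra-derived lower bound $r_1(D)\geq d(D-d+2)^n/n!$, at which point the two $n!^{1/n}$ factors cancel and what is left is $d^{1/n}\frac{n}{n+1}\frac{D-d+2}{D}-\frac{n+3}{2n+2}\cdot\frac{n}{D}$. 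With those two corrections, the rest of your outline --- lower-bounding $r_1(D)^{1/m}\geq r_1(D)^{1/n}$, using \eqref{positivity} to keep $E_m r_1(D)^{1/m}/D - c_2(M)/D$ above $\tfrac12 E_m d^{1/n}/n!^{1/n}$, using \eqref{assumption on D, bounded} to absorb the $c_1(M)\log r_1(D)/D$ error into a $+1$, and dividing through by the strictly positive quantity \eqref{another assumption on D} --- is precisely what the paper does, culminating in $\delta > \exp(\cdots)$ contradicting \eqref{Choosing delta}.
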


\begin{proof}
Assume that such hypersurface does not exist. 
By Lemma \ref{Slope, height, reduction modulo prime power}, for some $1\leq m\leq n$, we have 
\begin{align*}
\frac{\widehat{\mu}(\ol{F_D})}{D}\leq \frac{\log B}{[K:\QQ]}
&+\frac{1}{Dr_1(D)}\log r_1(D)^{c_1(M)r_1(D)} \delta^{E_m r_1(D)^{1+1/m} -c_2(M) r_1(D)}
\\&-\frac{1}{[K:\QQ]}\sum_{j\in J}\frac{Q_{\xi_j}(r_1(D))}{Dr_1(D)}\log N_{\mf{p}_j}^{a_j}.
\end{align*}
Since $\xi_j$ is regular, by \cite[Prop. 3.5]{Chen12b}, 
$$\frac{Q_{\xi_j}(r_1(D))}{Dr_1(D)}\geq (n!)^{1/n} \frac{n}{n+1} \frac{r_1(D)^{1/n}}{D}-\frac{n+3}{2n+2}\cdot \frac{n}{D}.$$
In the proof of \cite[Cor. 3.7]{Chen12b}, which uses a result of Sombra \cite{Som97}, it is known that for $D>d-2$, 
$$r_1(D)\geq \frac{d(D-d+2)^n}{n!}.$$
Chardin \cite{Cha89} gave the following bound:
\begin{equation}\label{Chardin's bound}
r_1(D)\leq d\binom{D+n}{n}\leq \frac{d(D+n)^n}{n!}.
\end{equation}
Chen \cite[\S 2.4]{Chen12b} proved that
$$\frac{\widehat{\mu}(\ol{F_D})}{D}\geq -\frac{1}{2}\log(n+1).$$
Combining these bounds and using (\ref{assumption on D, bounded}), we have
\begin{align*}
0 &< \frac{1}{2}\log (n+1)+\frac{\log B}{[K:\QQ]}+1+(\log \delta)\left(\frac{E_m r_1(D)^{1/m}}{D}-\frac{c_2(M)}{D}\right)
\\&\quad -\left(d^{1/n} \frac{n}{n+1}\frac{D-d+2}{D}-\frac{n+3}{2n+2}\cdot \frac{n}{D}\right)\sum_{j\in J} \frac{\log N^{a_j}_{\mf{p_j}}}{[K:\QQ]}.
\end{align*}
Since $\log \delta<0$ and 
$$r_1(D)^{1/m}\geq r_1(D)^{1/n}\geq \frac{d^{1/n}}{n!^{1/n}}(D-d+2),$$
we have 
\begin{align*}
&\quad \left(d^{1/n} \frac{n}{n+1}\frac{D-d+2}{D}-\frac{n+3}{2n+2}\cdot \frac{n}{D}\right)\sum_{j\in J} \frac{\log N^{a_j}_{\mf{p_j}}}{[K:\QQ]}
\\&< \frac{1}{2}\log (n+1)+\frac{\log B}{[K:\QQ]}+1+(\log \delta)\left(E_m \frac{d^{1/n}}{n!^{1/n}}\frac{D-d+2}{D} -\frac{c_2(M)}{D}\right).
\end{align*}
Using (\ref{another assumption on D}), (\ref{local ring mod prime cardinality assumption}), and $D=d\log B$, we have
 \begin{align*}
&\quad d^{1/n} \frac{n}{n+1}\left(1-\frac{d-2}{D}\right)-\frac{n+3}{2n+2}\cdot \frac{n}{D}
\\&<d^{1/n}\frac{n}{n+1}\cdot \bigg(\frac{d[K:\QQ](2+\log (n+1))}{2D}+1
\\& \quad \quad\quad\quad \quad\quad\quad+(\log \delta)\frac{d[K:\QQ]}{D}\left(E_m \frac{d^{1/n}}{n!^{1/n}}\frac{D-d+2}{D} -\frac{c_2(M)}{D}\right)\bigg).
\end{align*}
Cancelling the term $d^{1/n}n/(n+1)$, using (\ref{positivity}) and the fact that $E_m\geq 1/2$, and cancelling the $D$'s, we have
$$\delta>\exp \left(\frac{-4(d-2)-2(n+3)d^{-1/n}-2d[K:\QQ] (2+\log(n+1))}{d[K:\QQ] d^{1/n} n!^{-1/n}}\right),$$
which contradicts (\ref{Choosing delta}).
\end{proof}

\begin{lemma}\label{existence of D}
In Lemma \ref{auxiliary hypersurface}, we can in fact choose a positive integer $D_0>d-2$ such that for any $D> D_0$, (\ref{positivity}), (\ref{assumption on D, bounded}) and (\ref{another assumption on D}) hold for all $1\leq m\leq n$.
\end{lemma}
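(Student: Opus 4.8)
The plan is to analyze the asymptotic behavior of the three conditions \eqref{positivity}, \eqref{assumption on D, bounded} and \eqref{another assumption on D} as $D\to\infty$, with $m$ ranging over the finite set $\{1,\dots,n\}$, and exhibit an explicit threshold $D_0$ beyond which all of them hold simultaneously. Since there are only finitely many values of $m$, it suffices to treat each inequality for fixed $m$ and then take the maximum of the resulting thresholds. Observe that $n$, $d$, $M$ and $[K:\QQ]$ are fixed throughout, so every quantity appearing in these inequalities other than $D$ (and $m$) is a constant; in particular $c_1(M)$, $c_2(M)$ and $E_m$ are constants, with $E_m\geq \tfrac12$ for all $m\geq 1$ by the elementary bound $\tfrac{m}{m+1}(m!)^{1/m}\geq \tfrac12$.

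First I would dispatch \eqref{another assumption on D}: its left-hand side is $d^{1/n}\tfrac{n}{n+1}(1-\tfrac{d-2}{D})-\tfrac{n+3}{2n+2}\cdot\tfrac{n}{D}$, which converges to $d^{1/n}\tfrac{n}{n+1}>0$ as $D\to\infty$ and is monotonically increasing in $D$; hence it is positive for all $D$ larger than some explicit $D_0^{(3)}$ depending only on $n,d$. Next, \eqref{assumption on D, bounded}: by Chardin's bound \eqref{Chardin's bound} we have $r_1(D)\leq d(D+n)^n/n!$, so the left-hand side $\tfrac{1}{D}c_1(M)\log\bigl(d(D+n)^n/n!\bigr)$ is $O\bigl((\log D)/D\bigr)\to 0$, so it is $<1$ for all $D>D_0^{(2)}$ for some explicit $D_0^{(2)}$ depending on $M,n,d$. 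Finally, \eqref{positivity}: after dividing by $E_m$, the inequality reads $\tfrac{d^{1/n}}{n!^{1/n}}\cdot\tfrac{D-d+2}{D}-\tfrac{c_2(M)}{E_m D}>\tfrac12\cdot\tfrac{d^{1/n}}{n!^{1/n}}$; since $\tfrac{D-d+2}{D}\to 1$ and $\tfrac{c_2(M)}{E_m D}\to 0$ (uniformly in $m$, using $E_m\geq\tfrac12$), the left-hand side tends to $\tfrac{d^{1/n}}{n!^{1/n}}>\tfrac12\cdot\tfrac{d^{1/n}}{n!^{1/n}}$, so there is an explicit $D_0^{(1)}$, depending on $M,n,d$ and independent of $m$, beyond which it holds for every $1\leq m\leq n$; the positivity of $E_m$ also guarantees the second inequality in \eqref{positivity}.

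Setting $D_0:=\max\{d-2,\,D_0^{(1)},\,D_0^{(2)},\,D_0^{(3)}\}$ then gives a positive integer with $D_0>d-2$ such that \eqref{positivity}, \eqref{assumption on D, bounded} and \eqref{another assumption on D} hold for all $D>D_0$ and all $1\leq m\leq n$, as required. I do not anticipate a genuine obstacle here: all three conditions are "eventually true" statements of the form (constant) $\pm\,o(1) > $ (smaller constant), and the only points requiring a little care are (i) ensuring the threshold for \eqref{positivity} can be taken uniform in $m$, which follows from the uniform bound $E_m\geq\tfrac12$, and (ii) keeping track that $D-d+2>0$ once $D>d-2$ so that the relevant terms have the expected sign. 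The mildly delicate bookkeeping is just that \eqref{Choosing delta} fixed $\delta$ using $E_m\geq 1/2$, so one must confirm the same lower bound on $E_m$ is what makes \eqref{positivity} eventually hold; this is immediate.
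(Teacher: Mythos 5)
Your proof is correct and takes essentially the same approach as the paper, which simply observes that all three conditions hold in the limit $D\to\infty$; you merely spell out the asymptotics more explicitly and note the uniformity in $m$ via $E_m\geq\tfrac12$.
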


\begin{proof}
This can be verified by taking $D\rightarrow \infty$ in (\ref{positivity}), (\ref{assumption on D, bounded}) and  (\ref{another assumption on D}).
\end{proof}

We replace the use of Bertrand's postulate in \cite[Theorem 4.2]{Chen12b} by the use of a theorem on primes between cubes by Dudek \cite{Dud16}, 
in order to obtain polynomial dependence on $d$ in the polylogarithmic factor of the bound.
At the end, we have to bound the size of the covering by complex cells admitting $\delta$-extension. 
This size could be proved to be at most $\poly_{M,n,K}(d,\log B)$ because we were able to choose $\delta$ independent of $B$ in Lemma \ref{auxiliary hypersurface}.
This avoids the issue of overcounting as the factor $B^{(n+1)d^{-1/n}}$ appears when we count points over finite fields using Chen's strategy.

Let $v$ be the least common multiple of $1,2,\dots, [K:\QQ]$.

\begin{thm}\label{main theorem restated}
Let $D$ be an integer such that 
$$D>\max\left\{D_0, 2(M-n)(d-1)+n+2, d^{1+1/n} \frac{nv}{n+1}\cdot 3e^{33.3}\right\}$$ and 
let $B=e^{D/d}$, the set $S_1(X,B)$ of regular $K$-points of $X$ with height $\leq B$ is covered by $N$ hypersurfaces defined over $K$, none of which contain $X$,
where  
$$N\leq \poly_{M,n,K}(d,\log B) \cdot B^{(n+1)d^{-1/n}},$$
and each of which has degree $D=d\log B$.
\end{thm}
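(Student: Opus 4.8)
The plan is to cover $S_1(X,B)$ by a combination of hypersurfaces produced by Lemma \ref{auxiliary hypersurface} together with the cellular cover of Section \ref{Complex cellular covers}. First I fix $\delta$ as in \eqref{Choosing delta}; since Lemma \ref{existence of D} guarantees that \eqref{positivity}, \eqref{assumption on D, bounded} and \eqref{another assumption on D} hold for every $D>D_0$, the hypotheses of Lemma \ref{auxiliary hypersurface} are met for the $D$ in the statement. For each embedding $\sigma:K\hookrightarrow\CC$ we have the cellular cover $\{\phi_{\alpha}\}_{\alpha\in I_\sigma}$ of $X(\CC)$ whose cardinality is given by \eqref{size of cellular cover}; taking $\mu=r_1(D)$ and using Chardin's bound \eqref{Chardin's bound} together with $D=d\log B$, the product $\prod_{\sigma} \#I_\sigma$ over the $[K:\QQ]$ embeddings is $\poly_{M,n,K}(d,\log B)$, because $\delta$ was chosen \emph{independently of $B$} — this is exactly the point where the overcounting factor $B^{(n+1)d^{-1/n}}$ of \cite{Chen12b} is avoided. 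So it suffices, for each fixed choice of a tuple $(\alpha(\sigma))_{\sigma}$ of cells, to cover the regular points of height $\le B$ lying in $\bigcap_{\sigma}\phi_{\alpha(\sigma)}(\mc{C}_{\beta(\sigma)}^{\pmb{\delta}_{\beta(\sigma)}})$ by $\poly_{M,n,K}(d,\log B)\cdot B^{(n+1)d^{-1/n}}$ hypersurfaces of degree $D$.

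For a fixed tuple of cells I would run Chen's descent on the finite places. By Lemma \ref{auxiliary hypersurface}, as soon as a family of maximal ideals $(\mf{p}_j)_{j\in J}$ with prescribed reductions $\eta_j$ above \emph{regular} points $\xi_j\in\mc{X}(\mb{F}_{\mf{p}_j})$ satisfies the size condition \eqref{local ring mod prime cardinality assumption}, the common points in the given cells reducing to all the $\eta_j$ lie on a single degree-$D$ hypersurface not containing $X$. Thus I want to choose a prime $\mf{p}$ (or a small set of primes) with $N_{\mf{p}}$ of order roughly $B^{d^{-1/n}\cdot n/(n+1)}$, so that \eqref{local ring mod prime cardinality assumption} holds for each single reduction point $\eta=\xi\in\mc{X}(\mb{F}_{\mf{p}})$. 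Then $S_1(X,B)$ restricted to the fixed cells is covered by the fibers of reduction modulo $\mf{p}$ over the regular $\mb{F}_{\mf{p}}$-points of $\mc{X}$, and each such fiber is swept out by one hypersurface; the number of regular points of $\mc{X}(\mb{F}_{\mf{p}})$ is $O_{M,d}(N_{\mf{p}}^{n})\ll d\,N_{\mf{p}}^n$, which is of order $B^{nd^{-1/n}\cdot n/(n+1)}$ — not quite the exponent $(n+1)d^{-1/n}$ I want. The standard fix (as in Heath-Brown, Broberg, Chen) is to iterate: use a hypersurface section to drop dimension, or distribute the reduction conditions over $n$ independent primes $\mf{p}_1,\dots,\mf{p}_n$ each of size $\sim B^{d^{-1/n}/(n+1)}$ so that the product $\prod_j\log N_{\mf{p}_j}^{a_j}$ meets \eqref{local ring mod prime cardinality assumption} while the number of tuples of regular reduction points is $\prod_j O(d N_{\mf{p}_j}^{?})$; optimizing the sizes gives the exponent $(n+1)d^{-1/n}$.

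To make the primes of the required size actually exist \emph{and} depend polynomially on $d$, I replace Bertrand's postulate (used in \cite[Theorem 4.2]{Chen12b}) by Dudek's theorem \cite{Dud16} on primes between consecutive cubes: for $x\ge e^{e^{33.3}}$ there is a prime in $(x^3,(x+1)^3)$, which, after unwinding, is what forces the lower bound $D>d^{1+1/n}\frac{nv}{n+1}3e^{33.3}$ in the statement and guarantees a rational prime $p$ (hence a maximal ideal $\mf{p}$ of $\mc{O}_K$ of norm a bounded power of $p$ dividing $v$-controlled data) with $N_{\mf{p}}$ in a window of multiplicative width $O(1)$ around the target size, while $\log N_{\mf{p}}=\Theta(\log B)$ contributes only to the polylogarithmic factor. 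The condition $D>2(M-n)(d-1)+n+2$ is the Sombra-type hypothesis ensuring $F_D$ has the expected rank $r_1(D)\ge d(D-d+2)^n/n!$ used inside Lemma \ref{auxiliary hypersurface}. Assembling: $\#\{\text{cell tuples}\}\times\#\{\text{regular reduction tuples}\}$ hypersurfaces of degree $D=d\log B$, with the first factor $\poly_{M,n,K}(d,\log B)$ and the second $\poly_{M,n,K}(d,\log B)\cdot B^{(n+1)d^{-1/n}}$, gives the bound; finally Theorem \ref{Arakelov Bombieri-Pila} follows by taking $B_0=e^{D_0/d}$ with $D_0$ the maximum appearing in the hypothesis and reading $B>B_0$ as $D=d\log B$ large. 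The main obstacle is the combinatorial optimization of the prime sizes (and their count of regular reduction points) to land exactly on the exponent $(n+1)d^{-1/n}$ while keeping every auxiliary factor polylogarithmic and polynomial in $d$ — in particular checking that Dudek's window is narrow enough that the slack in \eqref{local ring mod prime cardinality assumption} is absorbed without inflating the exponent.
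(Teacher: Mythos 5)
Your overall framing is right — fix $\delta$ independently of $B$ so the cell cover is polylogarithmic, then handle the finite places by Chen's descent, and use Dudek's theorem to supply primes in a narrow window. But the middle of your argument contains a computational slip that sends you down the wrong road, and you miss the one structural ingredient that actually makes the prime-side work.

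The slip: condition \eqref{local ring mod prime cardinality assumption} with $|J|=1$ requires $\log N_{\mf p}^{a}\geq d^{-1/n}\tfrac{n+1}{n}\log B$, i.e.\ $N_{\mf p}^{a}\sim B^{d^{-1/n}(n+1)/n}$ — the fraction is $(n+1)/n$, not $n/(n+1)$ as you wrote, and the exponent $a=v/f$ matters. Then $\#\mc{X}(A_{\mf p}^{(a)})\ll d\,N_{\mf p}^{an}\sim B^{(n+1)d^{-1/n}}$, which is \emph{exactly} the exponent you want. So a single prime already lands on $(n+1)d^{-1/n}$; there is no need to ``iterate over $n$ primes'' or ``drop dimension by a hypersurface section,'' and indeed your proposed distribution over $n$ primes of size $B^{d^{-1/n}/(n+1)}$ would have $\sum_j\log N_{\mf p_j}^{a_j}$ too small to satisfy \eqref{local ring mod prime cardinality assumption} at all (the condition is a sum, not a product, and your sizes undershoot it). The paper always takes $|J|=1$ in Lemma \ref{auxiliary hypersurface}.

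What you are missing is \emph{why} the paper nevertheless invokes $r$ distinct primes $\mf p_1,\dots,\mf p_r$ at essentially the same size $N_0$: not for any simultaneous/product structure, but because of \cite[Lemma 4.1]{Chen12b}, which gives the \emph{union} decomposition $S_1(X,B)=\bigcup_{i=1}^r S_1(X,B,\mf p_i)$. A regular $K$-point may reduce to a \emph{singular} $\mb F_{\mf p}$-point for a given $\mf p$, so one must take enough primes (here $r\leq A_3=\poly_{M,n,K}(d)$) to guarantee every regular point reduces regularly modulo at least one of them. Dudek's theorem is then used to place all $r$ primes in the multiplicative window $[N_0,(N_0^{1/3}+r)^3]$, so that summing $dn\,p_i^{vn}$ over $i\leq r$ still gives $\poly_{n,K}(r,d)\cdot B^{(n+1)d^{-1/n}}$ and $r$ only contributes to the polynomial-in-$d$ prefactor. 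You should also account for the preliminary case (handled in the last paragraph of the proof of \cite[Theorem 4.2]{Chen12b}) where $\log B/[K:\QQ]$ is small compared with $h_{\ol{\mc L}}(X)$ and $S(X,B)$ already lies on a single hypersurface, since otherwise $r$ is not controlled. Without the union lemma and the corrected exponent computation, the counting step of your proposal does not close.
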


\begin{proof}
In the last paragraph of the proof of Theorem 4.2 in \cite{Chen12b}, Chen proved that if $D>2(M-n)(d-1)+n+2$ and if
$$\frac{\log B}{[K:\QQ]}<  \frac{n!}{d(2n+2)^{n+1}} h_{\ol{\mc{L}}} (X)-\frac{3}{2} \log (M+1)-2^n,$$
then 
$S(X,B)$ is contained in a hypersurface of degree $D$ in $\mb{P}^M_K$ which does not contain $X$.
It suffices to prove the case when
$$\frac{\log B}{[K:\QQ]}\geq \frac{n!}{d(2n+2)^{n+1}} h_{\ol{\mc{L}}} (X)-\frac{3}{2} \log (M+1)-2^n.$$
Let $N_0\in (1,\infty)$ be such that
$$\log N_0= d^{-1/n}\frac{n+1}{nv}\log B.$$
Let $r$ be the integral part of the number
$$\frac{(M-n)(d-1)\log B+((M-n)h_{\ol{\mc{L}}}(X)+C_3)[K:\QQ]}{\log N_0}+1.$$
where $C_3$ is the constant defined as follows in Notation 19 of Chen's paper \cite{Chen12b}:
let $\ol{\mc{E}}$ be the trivial Hermitian line bundle of rank $M+1$ over $\Spec \mc{O}_K$, denote by $S^d(\ol{\mc{E}}^\vee)$ be the $d$-th symmetric power of $\ol{\mc{E}}^\vee$, let 
\begin{align*}
C_1:&=(n+2)\widehat{\mu}_{\max}(S^d(\ol{\mc{E}}^\vee))+\frac{1}{2}(n+2)\log \rk (S^d \mc{E})
\\&\quad+\frac{d}{2} \log ((n+2)(M-n))+\frac{d}{2}(n+1)\log (M+1),
\end{align*} 
$$C_2:= \frac{M-n}{2}\log \rk (S^d\mc{E})+\frac{1}{2}\log \rk (\Lambda^{M-n}\mc{E})+\log \sqrt{(M-n)!}+ (M-n)\log d,$$
$$C_3:= (M-n)C_1+C_2.$$
We have 
$$r\leq \frac{A_1\log B+A_2}{\log N_0}+1,$$
where $$A_1=(M-n)(d-1)+\frac{(2n+2)^{n+1}}{n!} (M-n)d$$
and 
$$A_2=[K:\QQ]\left(C_3+\frac{(2n+2)^{n+1}}{n!} d\left(\frac{3}{2}\log (M+1)+2^n\right)\right).$$
By Lemma \ref{existence of D}, the conditions in Lemma \ref{auxiliary hypersurface} hold. 
In particular, $\log B=D/d\geq 1/d.$
We then have
$$r< A_3:=v\frac{A_1+dA_2}{d^{-1/n} (n+1)/n}+1.$$

By our assumption on $D$, we have $N_0^{1/3}>e^{e^{33.3}}$, so by Dudek's Theorem \cite{Dud16},  there exist $r$ distinct prime numbers $p_1,\dots, p_r$ such that $N_0\leq p_i\leq (N_0^{1/3}+r)^3$ for any $i=1,\dots, r$. Choose for each $i$ a maximal ideal $\mf{p}_i$ of $\mc{O}_K$ lying over $p_i$.
By \cite[Lemma 4.1]{Chen12b}, 
$$S_1(X,B)=\bigcup_{i=1}^r S_1(X,B, \mf{p}_i).$$
For any $i$, $N_{\mf{p}_i}$ is a power of $p_i$ whose exponent $f_i$ divides $v$ because $f_i\leq [K:\QQ]$.
Let $a_i:=v/f_i$.
Let $\xi$ be a regular $\mb{F}_{\mf{p}_i}$-point of $\mc{X}_{\mb{F}_{\mf{p}_i}}$.
Let $\mu$ and $\delta$ be as in Lemma \ref{auxiliary hypersurface}.
For each $\sigma:K\hra \CC$, let $\phi_{\alpha(\sigma)}: \mc{C}_{\beta(\sigma)}^{\pmb{\delta}_{\beta(\sigma)}}\ra X(\CC)$ be one of the map constructed in Section \ref{Complex cellular covers}. 
We have
$$\log N_{\mf{p}_i}^{a_i}=\log p_i^{a_if_i}=v \log p_i\geq v\log N_0= d^{-1/n}\frac{n+1}{n}\log B,$$
so by Lemma \ref{auxiliary hypersurface} (the case where $|J|=1$), for any $\eta\in \mc{X}(A_{\mf{p}_i}^{(a_i)})$ whose reduction modulo $\mf{p}_i$ is $\xi$,
there exists a hypersurface of degree $D$ of $\mb{P}^M_K$ not containing $X$ that contains 
$$S(X,B, \eta)\cap \bigcap_{\sigma: K\hra \CC} \phi_{\alpha(\sigma)}( \mc{C}_{\beta(\sigma)}^{\pmb{\delta}_{\beta(\sigma)}}).$$ 
By \cite[Prop. 12.1]{GL02}, the cardinality of $\mc{X}(A_{\mf{p}_i}^{(a_i)})$ is at most 
$$d\sum_{k=0}^n N_{\mf{p}_i}^{a_ik}\leq dn N_{\mf{p}_i}^{a_in}=dnp_i^{f_ia_in}=dnp_i^{vn}\leq dn (N_0^{1/3}+i)^{3vn}.$$
Hence, 
$$S_1(X,B,\mf{p_i})\cap \bigcap_{\sigma: K\hra \CC} \phi_{\alpha(\sigma)}( \mc{C}_{\beta(\sigma)}^{\pmb{\delta}_{\beta(\sigma)}})$$
is covered by at most $dnp_i^{vn}\leq dn (N_0^{1/3}+i)^{3vn}$ hypersurfaces of degree $D$ of $\mb{P}^M_K$ not containing $X$.
Therefore, 
$$S_1(X,B)\cap \bigcap_{\sigma: K\hra \CC} \phi_{\alpha(\sigma)}( \mc{C}_{\beta(\sigma)}^{\pmb{\delta}_{\beta(\sigma)}})$$ 
is covered by at most 
$$\sum_{i=1}^r dn (N_0^{1/3}+i)^{3vn}\leq rdn(N_0^{1/3}+r)^{3vn}\leq \poly_{n,K}(r,d) B^{(n+1)d^{-1/n}}$$
hypersurfaces of degree $D$ of $\mb{P}^M_K$ not containing $X$.
By (\ref{size of cellular cover}), $S_1(X,B)$ is covered by $N$ hypersurfaces of degree $D$ of $\mb{P}^M_K$ not containing $X$, where $N$ is at most
$$ \poly_{n,K}(r,d) B^{(n+1)d^{-1/n}}\cdot (M+1)^{[K:\QQ]}\cdot \poly_{M,K}(d)\cdot \poly_{M,K} (\mu \log (1/\delta))\cdot \delta^{-2n[K:\QQ]}.$$
From \cite[Lemma 4.3]{Bos01}, we know $\widehat{\mu}_{\max}(S^d(\ol{\mc{E}}^\vee))\leq cd$, where $c$ is a constant depending only on $M$ and $K$.
We have $\log \rk (S^d\mc{E})\leq \rk(S^d\mc{E})\leq \poly_M(d)$.
We thus have $C_3\leq \poly_{M,n,K}(d)$, 
so $r<A_3\leq  \poly_{M,n,K}(d)$.
Recall that $\mu=r_1(D)$ and $D=d\log B$. 
Thus by (\ref{Chardin's bound}), $\mu\leq \poly_{n}(d, \log B)$.
We have 
\begin{align*}
\delta^{-1}
&\leq \max\left\{4, \exp \left(\frac{4(d-2)/d+2(n+3)d^{-1-1/n}+2[K:\QQ] (2+\log(n+1))}{[K:\QQ] d^{1/n} n!^{-1/n}}\right)\right\}
\\& \leq \max\left\{4,\exp \left(\frac{4+2(n+3)+2[K:\QQ] (2+\log(n+1))}{[K:\QQ] n!^{-1/n}}\right)\right\}.
\end{align*}
Therefore, $N$ is at most
$\poly_{M,n,K}(d, \log B) \cdot B^{(n+1)d^{-1/n}}.$
\end{proof}

Theorem \ref{Arakelov Bombieri-Pila} follows from Theorem \ref{main theorem restated} by taking 
$$B_0:=\exp \left(\frac{1}{d}\max\left\{D_0, 2(M-n)(d-1)+n+2, d^{1+1/n} \frac{nv}{n+1}\cdot 3e^{33.3}\right\}\right).$$

\begin{bibdiv}
\begin{biblist}

\bib{BCK24}{article} {author={G. Binyamini}, author={R. Cluckers}, author={F. Kato}, title={Sharp bounds for the number of rational points on algebraic curves and dimension growth, over all global fields},   pages={arXiv:2401.03982v2}}

\bib{BN19}{article} {author={G. Binyamini}, author={D. Novikov}, title={Complex cellular structures},  journal={Ann. of Math.}, volume={190(1)}, date={2019},  pages={145--248}}

\bib{BP89}{article}{author={E. Bombieri}, author={J. Pila}, title={The number of integral points on arcs and ovals}, journal={Duke Math. J.}, volume={59(2)}, date={1989},  pages={337--357}}

\bib{Bos96}{article} {author={J.-B. Bost}, title={P\'{e}riodes et isog\'{e}nies des vari\'{e}t\'{e}s ab\'{e}liennes sur les corps de nombres [d'apr\`{e}s D. Masser et G. W\"{o}stholz]},  journal={S\'{e}minaire Bourbaki,  Vol. 1994/1995, Ast\'{e}risque}, volume={237}, date={1996},  pages={Exp. No. 795, 115--161}}

\bib{Bos01}{article} {author={J.-B. Bost }, title={Algebraic leaves of algebraic foliations over number fields},  journal={Publ. Math. de l' IH\'{E}S}, volume={93}, date={2001},  pages={161--221}}

\bib{Bos06}{article} {author={J.-B. Bost}, title={Evaluation maps, slopes, and algebraicity criteria},  journal={Proc. Internat. Congr. Math. (Madrid 2006), Euro. Math. Soc.}, date={2007},  pages={537--562}}

\bib{Bro04}{article}{author={N. Broberg}, title={A note on a paper by R. Heath-Brown: ``The density of rational points on curves and surfaces''}, journal={J. reine angew. Math.}, volume={571}, date={2004},  pages={159--178}}

\bib{Cha89}{article} {author={M. Chardin}, title={Une majoration de la fonction de Hilbert et ses cons\'{e}quences pour l'interpolation alg\'{e}brique},  journal={Bulletin de la S. M. F.}, volume={117(3)}, date={1989},  pages={305--318}}

\bib{Chen12a}{article} {author={H. Chen}, title={Explicit uniform estimation of rational points I. Estimation of heights},  journal={Journal f\"{u}r die reine und angewandte Mathematik (Crelles Journal)}, volume={668}, date={2012},  pages={59--88}}

\bib{Chen12b}{article} {author={H. Chen }, title={Explicit uniform estimation of rational points II. Hypersurface coverings},  journal={Journal f\"{u}r die reine und angewandte Mathematik (Crelles Journal)}, volume={668}, date={2012},  pages={89--108}}

\bib{Dud16}{article} {author={A. W. Dudek}, title={An explicit result for primes between cubes},  journal={Funct. Approx. Comment. Math.}, volume={55(2)}, date={2016},  pages={177--197}}

\bib{GL02}{article} {author={S. R. Ghorpade}, author={G. Lachaud}, title={\'{E}tale cohomology, Lefschetz Theorems and Number of Points of Singular Varieties over Finite Fields},  journal={Mosc. Math. J.}, volume={2(3)}, date={2002},  pages={589--631; \emph{corrigenda and addenda}, Mosc. Math. J. \textbf{9(2)} (2009), 431--438.}}

\bib{Hea02}{article}{author={D. R. Heath-Brown}, title={The Density of Rational Points on Curves and Surfaces}, journal={Ann. of Math.}, volume={155(2)}, date={2002},  pages={553--598}}

\bib{Pil95}{article} {author={J. Pila}, title={Density of integral and rational points on varieties},  journal={Ast\'{e}risque}, volume={228}, date={1995},  pages={183--187}}

\bib{Sal07}{article} {author={P. Salberger}, title={On the Density of Rational and Integral Points on Algebraic Varieties},  journal={J. Reine Angew. Math.}, volume={606}, date={2007},  pages={123--147}}

\bib{Sar88}{article} {author={P. Sarnak}, title={Torsion points on varieties and homology of Abelian covers},  journal={Manuscript},  date={1988}}

\bib{Som97}{article} {author={M. Sombra}, title={Bounds for the Hilbert function of polynomial ideals and for the degrees in the Nullstellensatz},  journal={Journal of Pure and Applied Algebra}, volume={117/118}, date={1997},  pages={565--599}}

\bib{Wal15}{article} {author={M. N. Walsh}, title={Bounded Rational Points on Curves},  journal={Int. Math. Res. Not.}, volume={(14)}, date={2015},  pages={5644--5658}}

\end{biblist}
\end{bibdiv}

\end{document}